\theoremstyle{plain}
\newtheorem{theorem}{Theorem}
\newtheorem{lemma}{Lemma}[section]
\theoremstyle{definition}
\newtheorem{example}{Example}
\numberwithin{equation}{section}
\theoremstyle{remark}
\newtheorem*{remark}{Remark}
\def\arg{\operatorname{arg}}
\def\div{\operatorname{div}}
\newcommand{\mb}[1]{\mathbb #1}
\newcommand{\mc}[1]{\mathcal #1}
\newcommand{\wt}[1]{\widetilde #1}
\newcommand{\abs}[1]{\lvert#1\rvert}
\newcommand{\labs}[1]{\left\lvert\,#1\,\right\rvert}
\newcommand{\set}[2]{\{\,#1\,\mid\,#2\,\}}
\newcommand{\Lr}[1]{\left(#1\right)}
\newcommand{\aver}[1]{\left\langle\,#1\,\right\rangle}
\newcommand{\nm}[2]{\|\,#1\,\|_{#2}}
\newcommand{\red}{\color{red}}
\newcommand{\barint}{\kern4pt \raise3.4pt\hbox{\vrule height.6pt
		width7pt} \kern-11pt \int}
\def\na{\nabla}
\def\dx{\,\mathrm{d}\,x}
\newcommand{\eps}{\varepsilon}
\def\a{a^{\;\varepsilon}}
\def\uu{u^{\,\varepsilon}}
\def\vv{v^{\,\varepsilon}}
\def\pa{\partial}
\def\hmm{\text{MOD}}
\def\Lam{\Lambda}
\def\al{\alpha}
\def\del{\delta}
\def\x{\times}
\begin{document}
\title{An Efficient Online-Offline Method for Elliptic Homogenization Problems}
\author[Y.F. Huang]{Yufang Huang}
\address{LSEC, Institute of Computational Mathematics and Scientific/Engineering Computing, AMSS, Chinese Academy of Sciences, No. 55, East Road Zhong-Guan-Cun, Beijing 100190, China\\
	and School of Mathematical Sciences, University of Chinese Academy of Sciences, Beijing 100049, China}
\curraddr{Cornell University, 425 East 61st Street, New York, 10065, USA}
 \email{huangyufang@lsec.cc.ac.cn}

\author[P.B. Ming]{Pingbing Ming}
\address{LSEC, Institute of Computational Mathematics and Scientific/Engineering Computing, AMSS, Chinese Academy of Sciences, No. 55, East Road Zhong-Guan-Cun, Beijing 100190, China\\
 and School of Mathematical Sciences, University of Chinese Academy of Sciences, Beijing 100049, China}
 \email{mpb@lsec.cc.ac.cn}
 
\author[S.Q. Song]{Siqi Song}
\address{LSEC, Institute of Computational Mathematics and Scientific/Engineering Computing, AMSS, Chinese Academy of Sciences, No. 55, East Road Zhong-Guan-Cun, Beijing 100190, China\\
	and School of Mathematical Sciences, University of Chinese Academy of Sciences, Beijing 100049, China}
\email{songsq@lsec.cc.ac.cn}

\thanks{The second author would like to thank Professor Ruo Li for his helpful discussion in the earlier stage of the present work. P. B. Ming and S. Q. Song are supported by National Natural Science Foundation of China through Grant No. 11971467 and Beijing Academy of Artificial Intelligence (BAAI).}
\subjclass[2000]{35B27, 65N30, 74S05, 74Q05}
\date{\today}

\begin{abstract}
We present a new numerical method for solving the elliptic homogenization problem. The main idea is that the missing effective matrix is reconstructed by solving the local least-squares in an offline stage, which shall be served as the input data for the online computation. The accuracy of the proposed method are analyzed with the aid of the refined estimates of the reconstruction operator. Two dimensional and three dimensional numerical tests confirm the efficiency of the proposed method, and illustrate that this online-offline strategy may significantly reduce the cost without loss of accuracy.%
\end{abstract}
\keywords{Numerical homogenization, Heterogeneous multiscale method, Online-offline computing, Least-squares reconstruction, Stability}

\maketitle
\section{Introduction}
We consider a prototypical elliptic boundary value problem
 \begin{equation}\label{eq:bvp}
\left\{\begin{aligned}
    -\div (\a(x) \nabla\uu(x))&=f(x),\qquad&& x\in D\subset\mb{R}^d,\\
    \uu(x)&=0, \qquad && x\in\pa D,\\
   \end {aligned}\right.
\end{equation}
where $\eps$ is a small parameter that signifies explicitly the multiscale nature of the problem. We assume that the coefficient $\a$, which is not necessarily symmetric, belongs to a set $\mc{M}(\al,\beta,D)$ that is defined by
\begin{equation}\label{eq:set}
\begin{aligned}
\mc{M}(\al,\beta,D){:}=\{\mc{B}\in[L^\infty(D)]^{d^2}|&(\mc{B}\xi,\xi)\geq \al \abs{\xi}^2,\abs{\mc{B}(x)\xi}\le\beta\abs{\xi},  \\
&\text{ for any } \xi\in \mb{R}^d \text{ and a.e. } x \in D\},
\end{aligned}
\end{equation}
where $D$ is a bounded domain in $\mb{R}^d$ and $(\cdot,\cdot)$ denotes the  inner product in $\mb{R}^d$ , while $\abs{\cdot}$ is the corresponding norm.

In the sense of H-convergence~\cite{Tartar:1977, Tartar:2010}, for every sequence $\a\in\mc{M}(\al,\beta,D)$ and $f\in H^{-1}(D)$, the sequence $\uu$ of the solution to~\eqref{eq:bvp} satisfies
\begin{equation}\label{Hconvergence}
\left\{\begin{aligned}
  \uu\rightharpoonup u_0 \qquad &\text{ weakly in } H_0^1(D), \\
 \a\na\uu\rightharpoonup \mc{A}\na u_0\qquad &\text{ weakly in } [L^2(D)]^d,
\end{aligned}\right.
\qquad\text{as\quad}\eps\to 0,
\end{equation}
where $u_0$ is the solution of the homogenization problem
\begin{equation}\label{eq:homo}
\left\{\begin {aligned}
    -\div (\mc{A}(x)\na u_0(x))&=f(x),\qquad  &&x\in D,\\
    u_0(x)&=0, \qquad && x\in\pa D,
   \end{aligned}\right.
\end{equation}
and $\mc{A}\in\mc{M}(\al,\beta,D)$. Here $H_0^1(D),L^2(D)$ and $H^{-1}(D)$ are standard Sobolev spaces~\cite{Adams:2003}.

The quantities of interest for Problem~\eqref{eq:bvp} and Problem~\eqref{eq:homo} are the homogenized solution $u_0$ over the whole domain and the solution $\uu$ at certain critical local region. The former stands for the information at the large scale, and the later mimics the information at small scale. There are lots of work devoted to efficiently compute such quantities during the last several decades; see, e.g.,~\cite{Babuska:1976, EHou:2009, E:2011}, among many others. Presently we are interested in the efficient way to compute $u_0$. 
A typical way that towards this is provided by the heterogeneous multiscale method (HMM)~\cite{EE:2003, Abdulle:2012}, and the FE$^2-$method~\cite{Michel:1999} commonly used in the engineering community that is also in the same spirit of HMM. The underlying idea of this approach is to extract $\mc{A}$ by solving the cell problems posed on the sampling points of the macoscopic solver. At each point, one needs to solve $d$ cell problems with $d$ the dimensionality. Therefore, the main computational cost comes from solving all these cell problems. The number of the cell problems grows rapidly when higher-order macroscopic solvers are employed. To reduce the cost, certain nonconventional quadrature schemes were proposed in~\cite{Du2010} when finite element method is used as the macroscopic solver. The number of the cell problems reduces to one third compared to the standard mid-point quadrature scheme when $\mb{P}_2$ Lagrange finite element method is employed as the macroscopic solver. Unfortunately, it does not seem easy to extend such idea to even higher order macroscopic solvers because the quadrature nodes tend to accumulate in the interior of the element~\cite{Stroud:1970, Schmid:1978, solin2003higher}.

In~\cite{LiMingTang:2012}, the authors presented a local least-squares reconstruction of the effective matrix using the solution of the cell problems posed on the vertices of the triangulation, which was dubbed as HMM-LS. The total number of the cell problems equals to the total number of the interior vertices of the triangulation, which is of $\mc{O}(h^{-d})$ with $h$ the mesh size of the macroscopic solver. This method achieves higher-order accuracy with almost the same cost of HMM with $\mb{P}_1$ Lagrange finite element method as the macroscopic solver~\cite{EMingZhang:2005}. A drawback of this method is that the number of the cell problems is still quite large when mesh refinement is necessary. Moreover, if the adaptive strategy is used in the macroscopic solver, then one has to solve many cell problems around the regions with mesh refinement.

In this work, we propose an offline-online method to compute $u_0$ efficiently. The main idea is to separate the microscopic solver from the macroscopic solver. In the offline stage, we firstly solve the cell problems posed on a sampling point set and obtain the effective matrix at all these points, then we reconstruct an effective matrix locally by solving the discrete least-squares. The sampling set is constructed from a triangulation of domain, which is usually coarser than the online triangulation. In the online stage, we solve the macroscopic problem with the effective matrix prepared in the offline stage.  Such decoupled strategy brings more flexibilities to reduce the number of the cell problems, we may either refine the offline triangulation mesh or increase the reconstruction order, which is guided by the a priori error estimate. The offline computation bears certain similarity with h-p finite element method~\cite{Babuska:1981, Schwab:1998}. Moreover, the offline computation is no longer linked to the macroscopic solvers, which is particularly attractive to higher order macroscopic solver and three dimensional multiscale problems. With the aid of the theoretical results proved in~\cite{Li:2019, LiMingTang:2012}, we study the accuracy and the stability of the reconstruction procedure, which is crucial to prove the optimal error estimate of the proposed method. As illustrated by the numerical tests in \S~\ref{sec:test}, the offline-online method converges with optimal order while the cost is smaller than both HMM and HMM-LS method. 

The reduced basis HMM proposed in~\cite{AbdulleBai:2012, AbdulleBai:2013} also employed the offline-online idea. The difference between reduced basis HMM and our method lies in the following points: Firstly they employed reduced basis idea in the offline stage while we construct an independent triangulation upon which the cell problems are solved. Secondly they used the empirical interpolation method~\cite{Maday:2004} while we resort to a local least-squares to reconstruct the effective matrix. Finally, a thorough analysis of the least-squares reconstruction is conducted in our work, which concerns the approximation accuracy and the stability of the reconstruction, such rigorous theoretical results put the method on a firm footing. In addition, the analysis of the least-squares reconstruction is of independent interest for other problems such as the construction of the optimal polynomial admissible meshes~\cite{Calv:2008, Piazzon:2016}, the discrete norm for polynomials~\cite{Rak:2007}, the approximation of the Fekete points~\cite{Bos:2011} and the discontinuous Galerkin method based on patch reconstruction~\cite{ Li:2019, LiYang:2020}.

The rest of the paper is organized as follows. In \S~\ref{sec:method}, we introduce the offline-online method that is based on a local discrete least-squares reconstruction. We derive the optimal error estimate in \S~\ref{sec:conv}, in particular, we prove the discrete least-squares is stable with respect to small perturbation.  In \S~\ref{sec:test}, we report numerical examples in two and three dimensions, the coefficient $\a$ may be locally periodic, almost periodic and random checker-board. To demonstrate the efficiency of the offline-online method, we compare it with HMM and HMM-LS method. In addition, we solve a problem posed on L-shape domain with nonsmooth solution. The conclusions are drawn in the last section.

Throughout this paper, we shall use the standard notations for the Sobolev spaces, norms and semi-norms, cf.,~\cite{Adams:2003},
e.g.,
\[
    \nm{v}{H^1(D)}{:}=\nm{v}{L^2(D)}+\nm{\na v}{L^2(D)},\qquad
\abs{v}_{W^{m,p}(D)}{:}=\sum_{\abs{\al}=m}\nm{\na^{\al}v}{L^p(D)}.
\]
For any measurable set $E$, we define the mean of an integrable function $g$ over $E$ as
\[
\aver{g}_E{:}=\dfrac1{\abs{E}}\int_E g(x)\dx.
\]

We shall also use the discrete $\ell_p$ norm for any $x\in\mb{R}^n$ as
\[
\nm{x}{\ell_p}{:}=\begin{cases}
\Lr{\sum_{i=1}^n\abs{x_i}^p}^{1/p}&1\le p<\infty,\\
\max_{1\le i\le n}\abs{x_i}&p=\infty.
\end{cases}
\]
Throughout the paper the generic constant $C$ may be different from line to line, while it is independent of $\eps$ and the mesh size parameters $h,H$.
\section{The Offline-online Method}\label{sec:method}
The macroscopic solver is chosen as the standard $\mb{P}_l$ Lagrange finite element method ($\mb{P}_l$ FEM)~\cite{Ciarlet:1978}. $\mb{P}_l$ is defined as the set of polynomials with degree less than $l$ for the sum of all variables. The finite element space is denoted by $V_h$ corresponding to the triangulation $\tau_h$ with mesh size $h$ that is the maximum of the element size $h_{\tau}$ for all elements $\tau\in\tau_h$, where $h_{\tau}$ is the diameter of $\tau$. We assume that all the elements $\tau$ in $\tau_h$ satisfy the shape-regular condition in the sense of Ciarlet and Raviart~\cite{Ciarlet:1978}, i.e., there exists a constant $\sigma_0$ such that $h_{\tau}/\rho_{\tau}\le\sigma_0$, where $\rho_{\tau}$ is the diameter of the smallest ball inscribed into $\tau$, and $\sigma_0$ is the so-called chunkiness parameter~\cite{brenner2007mathematical}.

The method consists of offline part and online part. In the offline part, we approximate the effective matrix $\mc{A}$ as follows.
\vskip .5cm
\noindent\textbf{Offline\;}We firstly construct a sampling triangulation $\mc{T}_H$ with mesh size $H$ over domain $D$. For simplicity, we assume that $\mc{T}_H$ consists of simplices, and  
$\mc{T}_H$ is assumed to be shape-regular with the chunkiness parameter $\sigma$. On each element $K\in\mc{T}_H$, the approximation effective matrix $A_H$ is reconstructed by solving a least-squares: for $i,j=1,\cdots,d$,
 \begin{equation}\label{eq:ls}
(A_H)_{ij}=\arg\min\limits_{p\in\mb{P}_m(S(K))}\sum\limits_{x_K\in\mc{I}(K)}\labs{
(A_H(x_K))_{ij}-p(x_K)}^2.
\end{equation}
Here $\mc{I}(K)$ is the set of all sampling points that belong to $S(K)$, where $S(K)$ is a patch of elements around $K$, which usually includes $K$. Its precise definition will be given later on. We refer to Fig.~\ref{fig:sample} for an example of such $S(K)$.

At each sampling point $x_K$, the effective matrix $A_H(x_K)$ is defined by averaging the flux arising from the cell problems:
\begin{equation}\label{eq:effective}
A_H(x_K)=\Lr{\aver{a^\eps\na v_1^\epsilon}_{I_\delta},\cdots,\aver{a^\epsilon\nabla v_d^\epsilon}_{I_\delta}},
\end{equation}
where the cell $I_\delta(x_K){:}=x_K+\delta Y$ with $Y{:}=(-1/2,1/2)^d$ and $\delta$ the cell size. Here for $i=1,\cdots,d$, $v_i^\eps$ satisfies
\begin{equation}\label{eq:cell}
\left\{\begin{aligned}
-\na\cdot(\a\na v_i^\eps) = 0& \qquad \text{in\quad}  I_\delta(x_K),\\
v_i^\eps=x_i&\qquad \text{on\quad}\pa I_\delta(x_K).
\end{aligned}\right.
\end{equation}
\vskip .5cm
\noindent\textbf{Online\;}Given $A_H$, we find $u_h\in V_h$ such that
\begin{equation}\label{eq:online}
\int_DA_H(x)\na u_h\cdot\na v\dx=\int_Df(x)v(x)\dx\quad \text{for all\quad} v\in V_h.
\end{equation}
\begin{figure}[h]
	\begin{minipage}{0.48\linewidth}
		\centering
		\includegraphics[width=7cm]{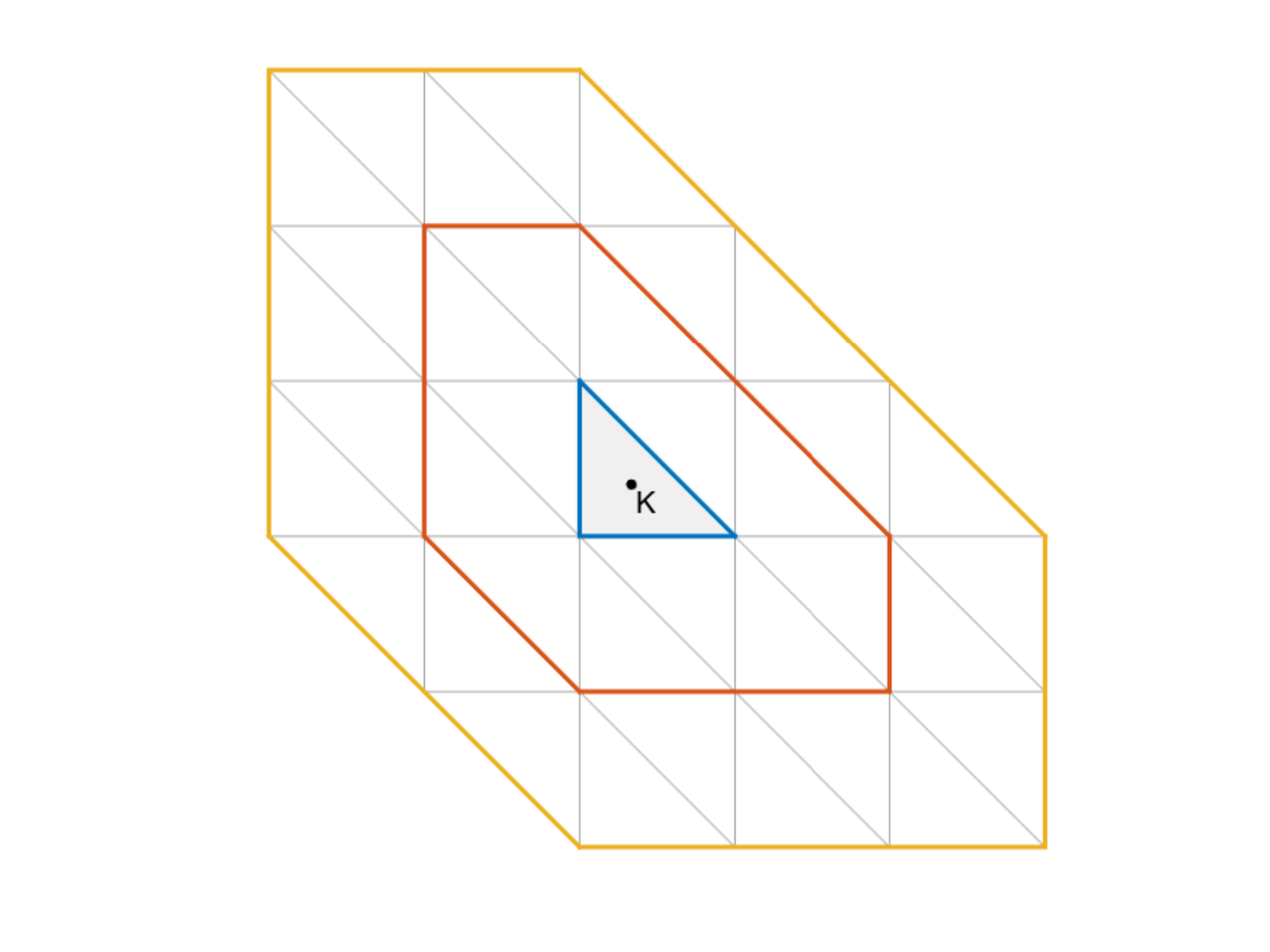}
		\center{(a). Example of $S_2(K)$ constructed by including all the Moore neighbors.}
	\end{minipage}
	\hfill
	\begin{minipage}{0.48\linewidth}
		\centering
		\includegraphics[width=6cm]{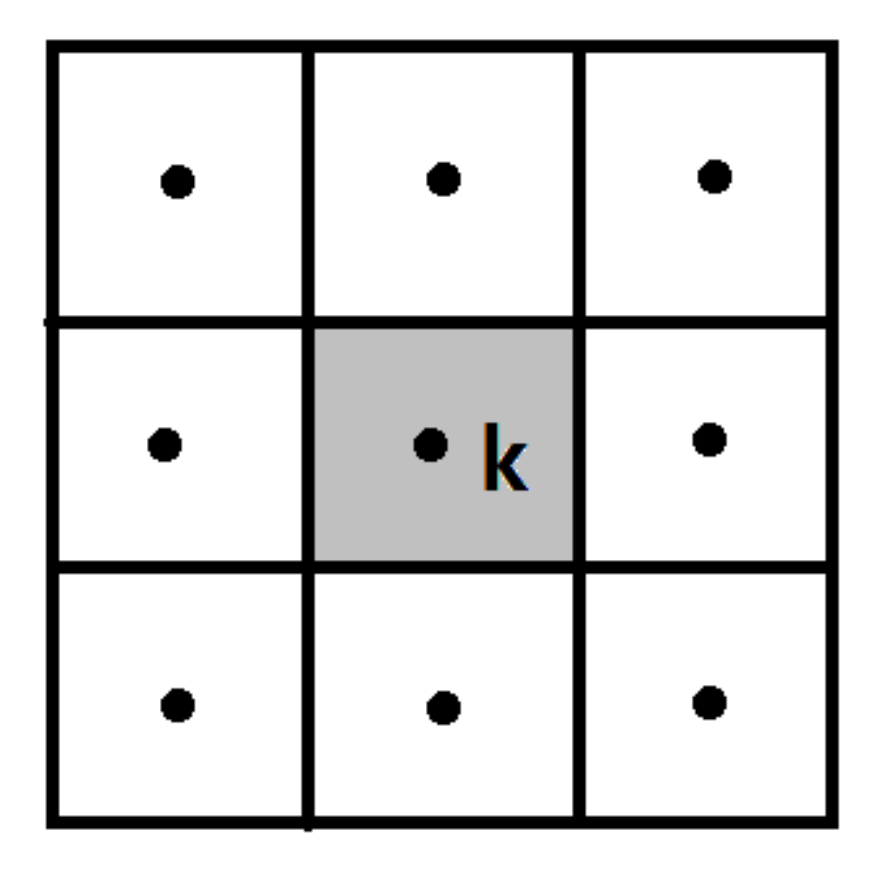}
		\center{(b). Example of $S(K)$ and the set $\mc{I}(K)$ consists of the black dots.}
	\end{minipage}
	\caption{Examples of the element patches and the sample set.}\label{fig:sample}
\end{figure}

%

\begin{remark}
The element $K\in\mc{T}_H$ may not be a simplex, which may be polygons or polytopes, the corresponding shape-regular condition and other mesh conditions may be found in~\cite{Li:2019}. Under these mesh conditions, the properties of the reconstruction are still valid.
\end{remark}

In what follows, we supplement some details in the algorithm. The first thing is the construction of the element patch $S(K)$ for any element $K\in\mc{T}_H$. We start from assigning a threshold value $N_{\text{lowest}}$ that is used to control the size of $S(K)$. There are several different ways to find $S(K)$. One way is to define $S(K)$ in an recursive way as in~\cite{LiMingTang:2012}: For any $t\in\mb{N}$, we let
\begin{equation}\label{eq:elepatch}
S_0(K){:}= K,\quad S_t(K)=\set{K\in\mc{T}_H}{\overline{K}\cap\overline{S_{t-1}(K)}\not=\emptyset}.
\end{equation}
Once $\# S_t(K)\geq N_{\text{lowest}}$, we stop the construction and let $S(K)=S_t(K)$. This means that we add the Moore neighbors~\cite{Su2001} to $S(K)$ in a recursive way. Another way is using the Von Neumann neighbor~\cite{Su2001}, i.e., we include the adjacent edge-neighboring elements into the element patch instead of the Moore neighbor. We refer to~\cite{Li:2019} and~\cite[Appendix A]{LiYang:2020} for a detailed description for such construction, while $S(K)$ in all the tests in \S~\ref{sec:test} are defined as in~\eqref{eq:elepatch}. We denote by $\mc{I}_t(K)$ the set containing all the sampling points that belong to $S_t(K)$. In all the tests in \S~\ref{sec:test}, we use the barycentric of each element $K$ as the sampling point. There are also other choices for construction $\mc{I}_t(K)$. However, the vertices are not preferred because the communication cost is quite high for three dimensional problems, though this is a good choice for two dimensional problems; cf.,~\cite{LiMingTang:2012}. In what follows, we may drop the subscripts $t$ in $S_t(K)$ and $\mc{I}_t(K)$ when there is no confusion may occur.

We remark that there are many other variants for the definitions of the cell problems and the effective matrix in the literatures; see e.g.,~\cite{Yue:2007, Gloria:2016}. The periodic cell problem will be discussed in \S~\ref{sec:test}. As illustrated by the examples in \S~\ref{sec:test}, the overhead caused by the local least-squares is small compared to the computational cost of solving all the cell problems, though the number of the local least-squares is also the same with the cell problems.
\section{Convergence of the Method}\label{sec:conv}
To study the convergence of the method, we define
\[
e(\hmm){:}=\max_{x\in D}\nm{\mc{A}(x)-A_H(x)}{F},
\]
where $\nm{B}{F}$ is the Frobenius norm of a $d-$by$-d$ matrix $B$.

Similar to~\cite[Lemma 3.1, Lemma 3.2]{LiMingTang:2012}, the following lemma gives the error estimates of the proposed method.
\begin{lemma}\label{lema:1staccuracy}
Let $u_0$ be the homogenized solution  with the homogenized matrix in $\mc{M}(\al,\beta,D)$. If $e(\hmm)<\al$, then there exists a unique solution $u_h$ satisfying~\eqref{eq:online}.

If $e(\hmm)<\nu\al/(1+\nu)$ for any $\nu>0$, then 
\begin{equation}\label{eq:h1err}
\nm{\na(u_0-u_h)}{L^2(D)}\le\dfrac{\beta}{\al}\inf_{v\in V_h}\nm{\na(u_0-v)}{L^2(D)}
+\dfrac{(1+\nu)C_p}{\al^2}\nm{f}{H^{-1}(D)}e(\hmm),
\end{equation}
 where $C_p$ appears in the discrete Poincar\'e inequality that only dependends on $D$:
\[
\nm{v}{H^1(D)}\le C_p\nm{\na v}{L^2(D)}\qquad\text{for all\quad}v\in V_h.
\]

Moreover, there exists $C$ depending only on $\al,\beta,\nu,C_p$ and $\nm{f}{H^{-1}(D)}$ such that
\[
\begin{aligned}
\nm{u_0-u_h}{L^2(D)}&\le C\Lr{\inf_{v\in V_h}\nm{\na(u_0-v)}{L^2(D)}\sup_{\nm{g}{L^2(D)}=1}\inf_{\chi\in V_h}\nm{\na(\phi_g-\chi)}{L^2(D)}+e(\hmm)},
\end{aligned}
\]
where $\phi_g\in H_0^1(D)$ is the unique solution of the problem:
\begin{equation}\label{eq:auxprob}
\int_D\mc{A}(x)\na v\cdot\na\phi_g\dx=\int_D g(x)v(x)\dx\qquad\text{for all\quad}
v\in H_0^1(D).
\end{equation}
\end{lemma}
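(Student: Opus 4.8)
The plan is to regard the use of the reconstructed matrix $A_H$ in place of the true effective matrix $\mc{A}$ as a variational crime, and to follow the Strang/C\'ea-plus-duality scheme. The starting point is the uniform pointwise bound $\labs{(A_H(x)-\mc{A}(x))\xi}\le e(\hmm)\abs{\xi}$ for a.e. $x$ and all $\xi\in\mb{R}^d$, which follows from $\nm{A_H(x)-\mc{A}(x)}{F}\le e(\hmm)$ together with the fact that the spectral norm is dominated by the Frobenius norm. Writing $a_H(u,v){:}=\int_DA_H\na u\cdot\na v\dx$ and splitting $A_H=\mc{A}+(A_H-\mc{A})$, coercivity of $\mc{A}$ yields $a_H(v,v)\ge(\al-e(\hmm))\nm{\na v}{L^2(D)}^2$, so whenever $e(\hmm)<\al$ the form $a_H$ is coercive on $V_h$; boundedness is immediate, and Lax--Milgram then gives existence and uniqueness of $u_h$.

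For the energy estimate I would insert the Galerkin approximation $\wt{u}_h\in V_h$ of the \emph{true} homogenized problem, namely $\int_D\mc{A}\na\wt{u}_h\cdot\na v\dx=\dual{f}{v}$ for all $v\in V_h$, and split $u_0-u_h=(u_0-\wt{u}_h)+(\wt{u}_h-u_h)$. The first part is controlled by the classical C\'ea estimate for the nonsymmetric form, $\nm{\na(u_0-\wt{u}_h)}{L^2(D)}\le(\beta/\al)\inf_{v\in V_h}\nm{\na(u_0-v)}{L^2(D)}$. For the second part, subtracting the two discrete equations and using $A_H=\mc{A}-(\mc{A}-A_H)$ gives $\int_D\mc{A}\na(\wt{u}_h-u_h)\cdot\na v\dx=\int_D(A_H-\mc{A})\na u_h\cdot\na v\dx$; choosing $v=\wt{u}_h-u_h$ and invoking coercivity of $\mc{A}$ yields $\nm{\na(\wt{u}_h-u_h)}{L^2(D)}\le(e(\hmm)/\al)\nm{\na u_h}{L^2(D)}$. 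The remaining ingredient is the a priori bound obtained by testing the online problem with $u_h$ and applying the discrete Poincar\'e inequality, $\nm{\na u_h}{L^2(D)}\le C_p\nm{f}{H^{-1}(D)}/(\al-e(\hmm))$. The hypothesis $e(\hmm)<\nu\al/(1+\nu)$ is exactly what converts $1/(\al-e(\hmm))$ into the clean factor $(1+\nu)/\al$, since it is equivalent to $\al-e(\hmm)>\al/(1+\nu)$; assembling the three pieces via the triangle inequality reproduces \eqref{eq:h1err}.

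For the $L^2$ estimate I would use an Aubin--Nitsche duality argument, taking care to use the \emph{adjoint} problem \eqref{eq:auxprob} because $\mc{A}$ is not symmetric. Writing $\nm{u_0-u_h}{L^2(D)}=\sup_{\nm{g}{L^2(D)}=1}\int_D(u_0-u_h)g\dx$ and inserting $v=u_0-u_h$ into \eqref{eq:auxprob}, I obtain $\int_D(u_0-u_h)g\dx=\int_D\mc{A}\na(u_0-u_h)\cdot\na\phi_g\dx$. Splitting off an arbitrary $\chi\in V_h$ produces two contributions: the term $\int_D\mc{A}\na(u_0-u_h)\cdot\na(\phi_g-\chi)\dx$, bounded by $\beta\nm{\na(u_0-u_h)}{L^2(D)}\inf_{\chi\in V_h}\nm{\na(\phi_g-\chi)}{L^2(D)}$, and the term $\int_D\mc{A}\na(u_0-u_h)\cdot\na\chi\dx$, which by the same subtraction equals $\int_D(A_H-\mc{A})\na u_h\cdot\na\chi\dx$ and is bounded by $e(\hmm)\nm{\na u_h}{L^2(D)}\nm{\na\chi}{L^2(D)}$. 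Controlling $\nm{\na\chi}{L^2(D)}$ through $\nm{\na\phi_g}{L^2(D)}$ and $\inf_\chi\nm{\na(\phi_g-\chi)}{L^2(D)}$, bounding $\nm{\na u_h}{L^2(D)}$ and $\nm{\na\phi_g}{L^2(D)}\le C$ by constants via Poincar\'e, inserting the already-proved energy estimate for $\nm{\na(u_0-u_h)}{L^2(D)}$, and taking the supremum over $g$ then gives the stated bound, with $\inf_{v}\nm{\na(u_0-v)}{L^2(D)}\sup_{g}\inf_{\chi}\nm{\na(\phi_g-\chi)}{L^2(D)}$ as the leading term and the remaining contributions collected into the $O(e(\hmm))$ term.

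The routine part is the chain of Cauchy--Schwarz and coercivity estimates; the place that needs care, and which I expect to be the crux, is the clean separation of the discretization error from the model error. Introducing the auxiliary Galerkin solution $\wt{u}_h$ is precisely what restores a usable orthogonality that the perturbed problem lacks, and it is this device---together with the sharp conversion $1/(\al-e(\hmm))\le(1+\nu)/\al$ afforded by the hypothesis---that produces the exact constants $\beta/\al$ and $(1+\nu)C_p/\al^2$ in \eqref{eq:h1err} rather than merely order-optimal ones. In the duality step the analogous subtlety is to ensure that every factor multiplying $e(\hmm)$, namely $\nm{\na u_h}{L^2(D)}$ and $\nm{\na\phi_g}{L^2(D)}$, is bounded uniformly, so that those contributions may honestly be absorbed into the additive $e(\hmm)$ term.
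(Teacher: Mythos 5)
Your proposal is correct, and it follows essentially the same route the paper intends: the paper omits the proof, stating that it ``follows the same line of [E--Ming--Zhang, Theorem 1.1],'' and that argument is precisely your Strang/C\'ea scheme --- Lax--Milgram via the perturbed coercivity $\al-e(\hmm)$, the auxiliary Galerkin solution $\wt{u}_h$ with the splitting $u_0-u_h=(u_0-\wt{u}_h)+(\wt{u}_h-u_h)$, and Aubin--Nitsche duality with the adjoint problem \eqref{eq:auxprob}. Your handling of the explicit constants (the conversion $1/(\al-e(\hmm))\le(1+\nu)/\al$ yielding $\beta/\al$ and $(1+\nu)C_p/\al^2$) is exactly the refinement the paper flags as the only deviation from the cited proof.
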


The proof of the above lemma follows the same line of~\cite[Theorem 1.1]{EMingZhang:2005} except the explicit constants in~\eqref{eq:h1err}, we omit the proof.

To further elucidate the error structure of the method, we define the reconstruction operator for any piecewise constant function $v$ defined on $\mc{T}_H$ as follows. Let $\mc{R}_K v$ be the solution of the least-squares
\begin{equation}\label{eq:ls2}
\mc{R}_K v=\arg\min\limits_{p\in\mb{P}_m(S(K))}\sum\limits_{x_K\in\mc{I}(K)}\labs{
v(x_K)-p(x_K)}^2.
\end{equation}
We imbed $\mc{R}_K$ into a global operator as $\mc{R}|_K=\mc{R}_K$.

Given the reconstruction operator $\mc{R}$, we decompose $e(\text{MOD})$ as
\begin{equation}\label{eq:decmod}
\begin{aligned}
\mc{A}(x)-A_H(x)&=\mc{A}(x)-\mc{R}\mc{A}(x)+\mc{R}\mc{A}(x)-A_H(x)\\
&=\Lr{\mc{A}(x)-\mc{R}\mc{A}(x)}+\mc{R}\Lr{\mc{A}(x)-\overline{A}_H},
\end{aligned}
\end{equation}
where $\overline{A}_H(x)$ is a piecewise matrix posed on $\mc{T}_H$ that is defined by
\[
\overline{A}_H|_K{:}=A_H(x_K),
\]
with $A_H(x_K)$ given by~\eqref{eq:effective} and~\eqref{eq:cell}.

The first term in the right-hand side of~\eqref{eq:decmod} is the reconstruction error while the second one is the so-called estimation error. To quantify these two terms, we need  some properties of the reconstruction operator, which is by now well-understood by virtue of~\cite{LiMingTang:2012} and~\cite{Li:2019}. To state such properties, we make two assumptions on $S(K)$ and $\mc{I}(K)$.
\vskip .5cm
\noindent\textbf{Assumption A}
For every $K\in\mc{T}_H$, there exist constants $R$ and $r$ that are independent of $K$
such that $B_r\subset S(K)\subset B_R$ with $R\ge 2r$, and $S(K)$ is star-shaped with respect to $B_r$, where $B_{\rho}$ is a disk with radius $\rho$.

This assumption concerns the geometry of $S(K)$, which is crucial for the uniform boundedness of $\mc{R}$. The motivation for this assumption lies in the following Markov inequality~\cite{Markov:1889}: 
\begin{equation}\label{eq:markov1}
\nm{\na g}{L^\infty(S(K))}\le\dfrac{4m^2R}{r^2}\nm{g}{L^\infty(S(K))}\qquad\text{for all}\quad g\in\mb{P}_m(S(K)).
\end{equation}
Here $\nm{\na g}{L^\infty(S(K))}{:}=\max_{x\in\overline{S(K)}}\nm{\na g(x)}{\ell_2}$. This inequality is proved in~\cite[Lemma 5]{Li:2019}, which is a combination of~\cite[Proposition 11.6]{Wendland:2005} and the fact that $S(K)$ satisfies the uniform interior cone condition~\cite{Adams:2003}, which is a direct consequence of \textbf{Assumption A}.

In~\cite{LiMingTang:2012}, the authors make the following assumption on $S(K)$.
\vskip .5cm
\noindent\textbf{Assumption A'} $S(K)$ is a bounded convex domain and there exists $R$ such that $S(K)\subset B_R$.

By~\cite[Lemma 1.2.2.2 and Corollary 1.2.2.3]{Gris:1985}, \textbf{Assumption A'} implies \textbf{Assumption A}. Under \textbf{Assumption A'}, \textsc{Wilhelmsen}~\cite{Wilhelmsen:1974} proved the following Markov inequality:
\begin{equation}\label{eq:markov1974}
\nm{\na g}{L^\infty(S(K))}\le\dfrac{4m^2}{w(K)}\nm{g}{L^\infty(S(K))}\qquad\text{for all}\quad g\in\mb{P}_m(S(K)),
\end{equation}
where $w(K)$ is the width of $S(K)$, which is the minimum distance between parallel supporting hyperplanes of $S(K)$.

The above Markov inequalities may be viewed as a type of inverse inequality. By the classical inverse inequality for  p-finite element method~\cite[Theorem 4.76]{Schwab:1998} and a simple scaling argument, we may conclude that there exists $C$ independent of the diameter of $S(K)$ but depends on the shape of $S(K)$ such that for all $g\in\mb{P}_m(S(K))$,
\begin{equation}\label{eq:inverse}
\nm{\na g}{L^\infty(S(K))}\le\dfrac{Cm^2}{\text{diam} S(K)}\nm{g}{L^\infty(S(K))}.
\end{equation}
The index $2$ is sharp if $S(K)$ is a locally Lipschitz domain. However, for a cuspidal domain, \textsc{Kro\'o and Szabodos}~\cite{Kroo:2000} proved that if $S(K)$ is a Lip$\gamma$-domain with $0<\gamma<1$\footnote{A typical Lip$\gamma$-domain is the unit $\ell_{\gamma}-$ball, i.e.,$ \set{x\in\mb{R}^d}{\abs{x_1}^{\gamma}+\cdots+\abs{x_d}^{\gamma}\le 1}$ with $0<\gamma<1$.}, then
there exists $C$ depends on the shape of $S(K)$ such that
\begin{equation}\label{eq:cusp}
\nm{\na g}{L^\infty(S(K))}\le\dfrac{Cm^{2/\gamma}}{\text{diam} S(K)}\nm{g}{L^\infty(S(K))},
\end{equation}
where the index $2/\gamma$ is sharp. This would require a larger patch to ensure the reconstruction accuracy, and the reconstruction is less stable than the patch satisfying either \textbf{Assumption A} or \textbf{Assumption A'}.
Moreover, the constant $C$ in~\eqref{eq:inverse} is only known for $S(K)$ with special shape in the literature, e.g., $S(K)$ is an interval~\cite{Rak:2007}, and $S(K)$ is a simplex~\cite{Verfurth:1999, Kroo:1999}. All the prefactors are important for us to derive the realistic conditions that ensure the uniform boundedness of $\Lam(m,\mc{I}(K))$; cf., Lemma~\ref{lema:bd}.

On the other hand, The authors in~\cite{LiMingTang:2012} derived an explicit expression of $C$ that depends on the recursion depth $t$ and the chunkiness parameter $\sigma$ under \textbf{Assumption A'}. It seems that the convexity of the patch is rather restrictive in implementation, particularly for an $L$-shape domain. \textbf{Assumption A} is less restrictive and is easy to check in practice.
\vskip .5cm
\noindent\textbf{Assumption B}
For any $K\in\mc{T}_H$ and $p\in\mb{P}_m(S(K))$,
\[
p|_{\mc{I}(K)} = 0 \text{\qquad implies\qquad} p|_{S(K)}\equiv 0.
\]

This assumption concerns the cardinality of the sampling set $\mc{I}(K)$, which gives the uniqueness and hereby the existence of the solution of the discrete least-squares~\eqref{eq:ls}. \textbf{Assumption B} requires that the cardinality of $\mc{I}(K)$ is at least $\binom{m+d}{d}$ to ensure the unisolvence of the discrete least-squares. A quantitative version of this assumption is
\[
\Lam(m,\mc{I}(K))<\infty,
\]
with
\[
\Lam(m,\mc{I}(K)){:}=\max_{p\in\mb{P}_m(S(K))}\dfrac{\nm{p}{L^\infty(S(K))}}{\nm{p|_{\mc{I}(K)}}
{\ell_\infty}}.
\]

In practical implementation, the positions of the sampling nodes may be slightly perturbed due to the measure error or certain uncertainties~\cite{DeV:2002}. A natural question arises whether the reconstruction is robust with respect to the uncertainties. We shall prove that the reconstruction is stable with respect to small perturbation.

Next we prove some properties of the reconstruction operator $\mc{R}_K$.
\begin{lemma}\label{lema:reconstruction}
If \textbf{Assumption B} holds, then there exists a unique solution of~\eqref{eq:ls2} for any $K\in\mc{T}_H$. Moreover $\mc{R}_K$ satisfies
\[
\mc{R}_K g = g\qquad\text{for all\quad} g\in\mb{P}_m(S(K)).
\]

The stability result is valid for any $K\in\mc{T}_H$ and $g\in C^0(S(K))$, and
\begin{equation}\label{eq:stable}
\nm{\mc{R}_K g}{L^\infty(K)}\le
\Lam(m,\mc{I}(K))\sqrt{\#\mc{I}(K)}\;\nm{g|_{\mc{I}(K)}}
{\ell_\infty}.
\end{equation}
The quasi-optimal approximation property is valid in the sense that
\begin{equation}\label{eq:app-rec}
\nm{g-\mc{R}_Kg}{L^\infty(K)}\le\Lr{1+\Lam(m,\mc{I}(K))}\sqrt{\#\mc{I}(K)}
\inf_{p\in\mb{P}_m(S(K))}\nm{g-p}{L^\infty(S(K))}.
\end{equation}

If \textbf{Assumption A} and \textbf{Assumption B} are valid, then for any $\del\in(0,1)$, there exists
\begin{equation}\label{eq:radius}
\epsilon=\dfrac{\del r^2}{4\Lam(m,\mc{I}(K))m^2R},
\end{equation}
such that for the perturbed sampling set $\wt{I}(K)\subset\mc{I}(K)+B_{\epsilon}(0)$, there exists a unique $\wt{R}_Kg\in\mb{P}_m(S(K))$ satisfying
\begin{equation}\label{eq:perturb1}
\nm{\wt{R}_Kg}{L^\infty(K)}\le\dfrac{\Lam(m,\mc{I}(K))}{1-\del}\sqrt{\#\wt{I}(K)}\,
\nm{g|_{\wt{I}(K)}}{\ell_\infty},
\end{equation}
where $B_{\eps}(0)$ is a ball centered at $0$ with radius $\eps$.

If \textbf{Assumption A'} and \textbf{Assumption B} are valid, then the perturbation result~\eqref{eq:perturb1} remains true with
\[
\eps=\dfrac{\del w(K)}{4\Lam(m,\mc{I}(K))m^2}.
\]
\end{lemma}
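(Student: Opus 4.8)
The plan is to establish each claim of Lemma~\ref{lema:reconstruction} in sequence, since they are largely independent consequences of \textbf{Assumption B}, the norming-set constant $\Lam(m,\mc{I}(K))$, and the relevant Markov inequality. First I would treat the \emph{existence, uniqueness, and polynomial reproduction}. The least-squares problem~\eqref{eq:ls2} minimizes a nonnegative quadratic functional over the finite-dimensional space $\mb{P}_m(S(K))$, so a minimizer exists; its uniqueness follows from \textbf{Assumption B}, which guarantees that the map $p\mapsto p|_{\mc{I}(K)}$ is injective on $\mb{P}_m(S(K))$, hence the least-squares normal equations have a positive-definite Gram matrix. Polynomial reproduction $\mc{R}_K g=g$ for $g\in\mb{P}_m(S(K))$ is immediate: if $g$ is already a polynomial of degree $\le m$, plugging $p=g$ drives the objective to zero, and by uniqueness this is the minimizer.

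For the \emph{stability estimate}~\eqref{eq:stable}, since $\mc{R}_K g\in\mb{P}_m(S(K))$, the definition of the norming constant gives $\nm{\mc{R}_K g}{L^\infty(S(K))}\le\Lam(m,\mc{I}(K))\,\nm{(\mc{R}_K g)|_{\mc{I}(K)}}{\ell_\infty}$. The key observation is that the least-squares minimizer does not increase the data misfit beyond the data itself: comparing against the trivial competitor $p\equiv 0$ yields $\sum_{x_K}|(\mc{R}_K g)(x_K)-g(x_K)|^2\le\sum_{x_K}|g(x_K)|^2$, from which $\nm{(\mc{R}_K g)|_{\mc{I}(K)}}{\ell_2}\le 2\nm{g|_{\mc{I}(K)}}{\ell_2}$; more simply, one bounds $\nm{(\mc{R}_K g)|_{\mc{I}(K)}}{\ell_\infty}\le\nm{(\mc{R}_K g)|_{\mc{I}(K)}}{\ell_2}\le\sqrt{\#\mc{I}(K)}\,\nm{g|_{\mc{I}(K)}}{\ell_\infty}$ after using the minimizing property. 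Combining with the norming inequality and restricting the left-hand side from $S(K)$ to $K$ produces~\eqref{eq:stable}. The \emph{quasi-optimal approximation}~\eqref{eq:app-rec} then follows by the standard projection-type argument: for any $p\in\mb{P}_m(S(K))$, write $g-\mc{R}_K g=(g-p)-\mc{R}_K(g-p)$ using reproduction $\mc{R}_K p=p$, apply the triangle inequality and the stability bound~\eqref{eq:stable} to the term $\mc{R}_K(g-p)$, bound the sampled $\ell_\infty$ norm of $g-p$ by $\nm{g-p}{L^\infty(S(K))}$, and take the infimum over $p$.

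The genuinely delicate part, and what I expect to be the \textbf{main obstacle}, is the \emph{perturbation result}~\eqref{eq:perturb1}. Here the strategy is to first show that the perturbed sampling set $\wt{I}(K)$ remains a norming set with the degraded constant $\Lam(m,\mc{I}(K))/(1-\del)$. For any $g\in\mb{P}_m(S(K))$, pick $x^\ast\in\mc{I}(K)$ attaining $\nm{g|_{\mc{I}(K)}}{\ell_\infty}$ and a nearby $y\in\wt{I}(K)$ with $|x^\ast-y|\le\eps$; a first-order Taylor estimate gives $|g(x^\ast)|\le|g(y)|+\eps\,\nm{\na g}{L^\infty(S(K))}$. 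Invoking the Markov inequality~\eqref{eq:markov1} under \textbf{Assumption A} to control $\nm{\na g}{L^\infty(S(K))}$ by $(4m^2R/r^2)\nm{g}{L^\infty(S(K))}$, and then the original norming inequality together with the calibrated choice of $\eps$ in~\eqref{eq:radius}, one absorbs the gradient term: the coefficient $\Lam(m,\mc{I}(K))\cdot(4m^2R/r^2)\cdot\eps$ equals exactly $\del$. This yields $\nm{g}{L^\infty(S(K))}\le(1-\del)^{-1}\Lam(m,\mc{I}(K))\,\nm{g|_{\wt{I}(K)}}{\ell_\infty}$, i.e.\ $\Lam(m,\wt{I}(K))\le\Lam(m,\mc{I}(K))/(1-\del)$, which in particular is finite so \textbf{Assumption B} holds for the perturbed set and the associated least-squares solution $\wt{R}_K g$ exists and is unique. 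Finally I would repeat the stability argument of~\eqref{eq:stable} verbatim with $\mc{I}(K)$ replaced by $\wt{I}(K)$ and $\Lam$ replaced by its perturbed value to obtain~\eqref{eq:perturb1}. The case under \textbf{Assumption A'} is identical, substituting the Wilhelmsen inequality~\eqref{eq:markov1974} for~\eqref{eq:markov1}, which changes the gradient prefactor from $4m^2R/r^2$ to $4m^2/w(K)$ and hence rescales $\eps$ accordingly. The subtle point to handle carefully is ensuring the Taylor bound is applied along a segment lying inside $S(K)$, which is where the star-shapedness in \textbf{Assumption A} (or convexity in \textbf{Assumption A'}) is used to keep the gradient estimate valid on the whole path.
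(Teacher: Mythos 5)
Your proposal is correct and takes essentially the same route as the paper: the existence/uniqueness, reproduction, stability, and quasi-optimality claims are handled by the standard norming-set arguments (which the paper itself only cites from \cite{LiMingTang:2012}), and your perturbation argument---Taylor expansion at the extremal sample point $x^\ast$, absorption of the gradient term via the Markov inequality \eqref{eq:markov1} with the calibrated $\epsilon$ of \eqref{eq:radius}, then the discrete projection bound $\nm{\wt{R}_Kg|_{\wt{I}(K)}}{\ell_2}\le\nm{g|_{\wt{I}(K)}}{\ell_2}$ combined with $\#\wt{I}(K)=\#\mc{I}(K)$---coincides step for step with the paper's proof, including the substitution of Wilhelmsen's inequality \eqref{eq:markov1974} under \textbf{Assumption A'}. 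Your flagged concern about the Taylor segment remaining where the gradient bound applies is a gloss the paper shares, and your observation that the crude competitor $p\equiv 0$ only gives a factor $2$ is correctly superseded by the projection (nonexpansiveness) property you invoke.
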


The above lemma except the perturbation estimate~\eqref{eq:perturb1} is proved in~\cite[Theorem 3.3]{LiMingTang:2012}, which is crucial for the accuracy of the reconstruction operator $\mc{R}$, more refined estimates on the accuracy of $\mc{R}$ may be found in~\cite[Lemma 4]{Li:2019}. The perturbation estimate~\eqref{eq:perturb1} shows that the set of the sampling nodes $\mc{I}(K)$ perturbed a little bit remains a norming set with a slightly bigger upper bound, i.e., for all $\del\in(0,1)$,
\[
\Lam(m,\wt{I}(K))=\dfrac{\Lam(m,\mc{I}(K))}{1-\del}.
\]
This perturbation estimate has been encapsulated in an abstract form in~\cite[Lemma 2]{Li:2019}, while there is no proof for the perturbed reconstruction operator $\wt{R}$.
\begin{proof}
For any $p\in\mb{P}_m(S(K))$, we let $\abs{p(x^\ast)}=\nm{p|_{\mc{I}(K)}}{\ell_\infty}$, then for any $y\in\wt{I}(K)\subset\mc{I}(K)+B_{\epsilon}(0)$ with $\epsilon$ given by~\eqref{eq:radius}, by Taylor's expansion, we obtain
\[
\abs{p(x^\ast)}\le\abs{p(y)}+\epsilon\nm{\na p}{L^\infty(S(K))}.
\]

By \textbf{Assumption A}, the Markov inequality~\eqref{eq:markov1} is valid, and we obtain
\[
\abs{p(x^\ast)}\le\abs{p(y)}+\dfrac{4m^2R\epsilon}{r^2}\nm{p}{L^\infty(S(K))}.
\]

Using \textbf{Assumption B} and the above two inequalities, we obtain
\begin{align*}
\nm{p}{L^\infty(S(K))}&\le\Lam(m,\mc{I}(K))\nm{p|_{\mc{I}(K)}}{\ell_\infty}
=\Lam(m,\mc{I}(K))\abs{p(x^\ast)}\\
&\le\Lam(m,\mc{I}(K))\abs{p(y)}+\Lam(m,\mc{I}(K))\dfrac{4m^2R}{r^2}\epsilon\nm{p}{L^\infty(S(K))}\\
&\le\Lam(m,\mc{I}(K))\nm{p|_{\wt{I}(K)}}{\ell_\infty}+\del\nm{p}{L^\infty(S(K))},
\end{align*}
which immediately implies
\begin{equation}\label{eq:perturb}
\nm{p}{L^\infty(S(K))}\le\dfrac{\Lam(m,\mc{I}(K))}{1-\del}
\nm{p|_{\wt{I}(K)}}{\ell_\infty}
\qquad\text{for all\quad}\delta\in(0,1).
\end{equation}

Applying the above perturbation estimate to $\wt{R}_K g$, we obtain
\[
\nm{\wt{R}_Kg}{L^\infty(S(K))}\le\dfrac{\Lam(m,\mc{I}(K))}{1-\del}
\nm{\wt{R}_Kg|_{\wt{I}(K)}}{\ell_\infty}.
\]
This also gives the existence and uniqueness of $\wt{R}_K g$.

By the definition of $\wt{R}_K g$, we obtain
\[
\nm{\wt{R}_Kg|_{\wt{I}(K)}}{\ell_\infty}^2\le\nm{\wt{R}_Kg|_{\wt{I}(K)}}{\ell_2}^2
\le\nm{g|_{\wt{I}(K)}}{\ell_2}^2\le\#\wt{I}(K)\nm{g|_{\wt{I}(K)}}{\ell_\infty}^2.
\]
A combination of the above two inequalities and the fact that $\#\wt{I}(K)=\#\mc{I}(K)$ give~\eqref{eq:perturb1}.

If \textbf{Assumption A'} and \textbf{Assumption B} are valid, then we follow exactly the same line that leads to~\eqref{eq:perturb1} except that we use the Markov inequality~\eqref{eq:markov1974} for a convex patch $S(K)$.
\end{proof}

The following lemma ensures the uniform boundedness of $\Lam(m,\mc{I}(K))$. 
\begin{lemma}\label{lema:bd}
If \textbf{Assumption A} holds, then for any $\varepsilon>0$, if
\(
r>m\sqrt{2RH_K(1+1/\varepsilon)},
\)
then we may take
\begin{equation}\label{eq:uniform1}
\Lam(m,\mc{I}(K))=1+\varepsilon.
\end{equation}
Moreover, if
\(
r>2m\sqrt{RH_K},
\)
then we may take
\(
\Lam(m,\mc{I}(K))=2.
\)

If \textbf{Assumption A'} holds, then for any $\varepsilon>0$, if
\(
w(K)>2m^2H_K(1+1/\varepsilon),
\)
then we may take
\begin{equation}\label{eq:uniform2}
\Lam(m,\mc{I}(K))=1+\varepsilon.
\end{equation}
\end{lemma}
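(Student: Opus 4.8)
The plan is to prove this as a \emph{norming-set} estimate: I would show that the discrete $\ell_\infty$ norm of a polynomial sampled on $\mc{I}(K)$ controls its full $L^\infty$ norm on $S(K)$ with a constant that can be pushed arbitrarily close to $1$ by enlarging the inner radius $r$ (equivalently, by refining $H_K$). The only two analytic ingredients are a one-term Taylor estimate and the Markov inequality \eqref{eq:markov1} already recorded under \textbf{Assumption A}.

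First I would fix $p\in\mb{P}_m(S(K))$ and pick $x^\ast\in\overline{S(K)}$ with $\abs{p(x^\ast)}=\nm{p}{L^\infty(S(K))}$. The geometric input is that the sampling nodes (the barycenters of the elements of $\mc{T}_H$ lying in $S(K)$) have covering radius at most $H_K/2$ in $S(K)$; that is, there is a node $y\in\mc{I}(K)$ with $\abs{x^\ast-y}\le H_K/2$. Writing $p(x^\ast)=p(y)+\int_0^1\na p(y+s(x^\ast-y))\cdot(x^\ast-y)\ds$ and bounding the integrand by $\nm{\na p}{L^\infty(S(K))}$ gives $\abs{p(x^\ast)}\le\abs{p(y)}+\tfrac{1}{2}H_K\nm{\na p}{L^\infty(S(K))}$.

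Next I would insert the Markov inequality $\nm{\na p}{L^\infty(S(K))}\le\tfrac{4m^2R}{r^2}\nm{p}{L^\infty(S(K))}$ and use $\abs{p(y)}\le\nm{p|_{\mc{I}(K)}}{\ell_\infty}$ to obtain
\[
\nm{p}{L^\infty(S(K))}\le\nm{p|_{\mc{I}(K)}}{\ell_\infty}+\dfrac{2m^2RH_K}{r^2}\nm{p}{L^\infty(S(K))}.
\]
Provided $2m^2RH_K/r^2<1$, which is exactly the hypothesis $r>m\sqrt{2RH_K(1+1/\varepsilon)}$, I can absorb the last term to the left and read off
\[
\Lam(m,\mc{I}(K))\le\Lr{1-\dfrac{2m^2RH_K}{r^2}}^{-1}.
\]
It is then pure algebra to choose the threshold: requiring the right-hand side to equal $1+\varepsilon$ forces $r^2=2m^2RH_K(1+1/\varepsilon)$, which gives \eqref{eq:uniform1}, and requiring it to equal $2$ forces $r^2=4m^2RH_K$, which gives the ``moreover'' case. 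The \textbf{Assumption A'} variant \eqref{eq:uniform2} follows verbatim after replacing \eqref{eq:markov1} by Wilhelmsen's inequality \eqref{eq:markov1974}, so that $4m^2R/r^2$ becomes $4m^2/w(K)$ and the absorption condition reads $w(K)>2m^2H_K(1+1/\varepsilon)$.

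The step I expect to require the most care is the covering-radius claim $\abs{x^\ast-y}\le H_K/2$: it is what pins down the numerical constant (the factor $2RH_K$ rather than, say, $4RH_K$) in the threshold on $r$, and it is the only place where the barycentric choice of sampling nodes and the shape-regularity of $\mc{T}_H$ enter. Everything downstream, namely the Taylor bound, the Markov substitution, and the absorption, is routine once that geometric estimate is in hand.
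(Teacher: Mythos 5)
Your proposal is correct and follows essentially the same route as the paper: the paper's own proof (written out only for the \textbf{Assumption A'} case, with the \textbf{Assumption A} case delegated to \cite[Lemma 5]{Li:2019}) is precisely your argument---take the maximizer $x^\ast$ of $\abs{p}$, the nearest sampling node $x_\ell$ with $\abs{x_\ell-x^\ast}\le H_K/2$, Taylor expand, insert the Markov inequality \eqref{eq:markov1974} (respectively \eqref{eq:markov1}), and absorb the resulting term. The covering-radius bound $\abs{x_\ell-x^\ast}\le H_K/2$ that you flag as the delicate step is asserted in the paper with exactly the same constant and no further justification, so your treatment matches the paper on that point as well.
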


The first statement is proved in~\cite[Lemma 5]{Li:2019}, we only prove the second statement~\eqref{eq:uniform2}, which improves~\cite[Lemma 3.5]{LiMingTang:2012}.

\begin{proof}
Let $x^\ast\in\overline{S(K)}$ such that $\abs{p(x^\ast)}=\max_{x\in\overline{S(K)}}\abs{p(x)}$, and $x_\ell\in\mc{I}(K)$ such that
$\abs{x_{\ell}-x^\ast}=\min_{y\in\mc{I}(K)}\abs{y-x^\ast}$. Then
\[
\abs{x_{\ell}-x^\ast}\le H_K/2.
\]

By Taylor's expansion, we have
\[
p(x_\ell)=p(x^\ast)+(x_\ell-x^\ast)\cdot\na p(\xi_x),
\]
where $\xi_x$ lies on the line with endpoints $x^\ast$ and $x_\ell$. This gives
\[
\abs{p(x_\ell)}\le\abs{p(x^\ast)}+\dfrac{H_K}{2}\nm{\na p}{L^\infty(S(K))}.
\]
Using the Markov inequality~\eqref{eq:markov1974}, we immediately have
\[
\nm{p}{L^\infty(S(K))}\le\nm{p|_{\mc{I}(K)}}{\ell_\infty}+\dfrac{2m^2H_K}{w(K)}\nm{p}{L^\infty(S(K))}.
\]
This implies~\eqref{eq:uniform2}.
\end{proof}

If \textbf{Assumption A} holds, then we usually have $R\simeq t H_K$, the estimate~\eqref{eq:uniform1} suggests that $r\simeq m\sqrt{t}H_K$ implies the uniform boundedness of $\Lam(m,\mc{I}(K))$. This means that $S(K)$ cannot be too narrow in certain directions. If \textbf{Assumption A'} holds, then the estimate~\eqref{eq:uniform2} shows that $w(K)\simeq m^2H_K$, which immediately implies $t\simeq m^2$. Both conditions show that a relative large patch is required for the reconstruction. Furthermore, if $S(K)$ is a cuspidal domain, then we may use~\eqref{eq:cusp} to prove that if 
\[
\dfrac{\text{diam}\,S(K)}{H_K}\ge c_0m^{\gamma}(1+1/\eps),
\]
with a constant $c_0$ depending on the shape of $S(K)$, then the bound~\eqref{eq:uniform1} is also valid. This condition indicates
that $S(K)$ has a recursive depth $t\simeq m^{\gamma}$ with $\gamma>2$. An even larger patch is required to ensure the stability. This may occur for domain $D$ with complicated boundary or rough boundary.

It remains to find an upper bound for $\#\mc{I}(K)$. This is a direct consequence of \textbf{Assumption A} or \textbf{Assumption A'} and the shape regularity of $\mc{T}_H$.
\begin{lemma}\label{lema:cardinality}
If $\mc{T}_H$ is shape-regular and \textbf{Assumption A} or \textbf{Assumption A'} is valid, then
\begin{equation}\label{eq:upp}
\#\mc{I}(K)\le\Lr{\sigma R/H_K}^d.
\end{equation}
\end{lemma}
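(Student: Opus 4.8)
The plan is to turn the statement into a volume-packing estimate inside the enclosing ball $B_R$. First I would note that, because the sampling point is the barycenter of each element of $\mc{T}_H$, the set $\mc{I}(K)$ contains exactly one point per element of the patch, so $\#\mc{I}(K)$ equals the number of elements of $\mc{T}_H$ contained in $S(K)$. The only geometric input I need from \textbf{Assumption A} or \textbf{Assumption A'} is the single inclusion $S(K)\subset B_R$; both assumptions provide it directly, so the two cases are treated simultaneously and the remaining work rests entirely on the shape-regularity of $\mc{T}_H$.

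Next I would exploit the shape-regularity to bound the measure of each element from below. For every element $K'\subset S(K)$, the inscribed ball of radius $\rho_{K'}$ lies in the interior of $K'$, and the chunkiness bound $h_{K'}/\rho_{K'}\le\sigma$ gives $\rho_{K'}\ge h_{K'}/\sigma\ge H_K/\sigma$. Since the elements have pairwise disjoint interiors, their inscribed balls are pairwise disjoint, and all of them are contained in $B_R$. Comparing total volumes, $\sum_{K'\subset S(K)}\omega_d\rho_{K'}^d\le\abs{B_R}=\omega_d R^d$, where $\omega_d$ is the volume of the unit ball in $\mb{R}^d$; the factor $\omega_d$ cancels, leaving $\sum_{K'}\rho_{K'}^d\le R^d$, which is precisely the clean, dimension-free bound that produces \eqref{eq:upp}.

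Finally I would insert the uniform lower bound $\rho_{K'}\ge H_K/\sigma$ into this volume inequality: each of the $\#\mc{I}(K)$ terms is at least $(H_K/\sigma)^d$, so $\#\mc{I}(K)\,(H_K/\sigma)^d\le R^d$, and rearranging yields $\#\mc{I}(K)\le(\sigma R/H_K)^d$. The one place that needs care — the main obstacle — is justifying the uniform replacement $h_{K'}\ge H_K$ for every element of the patch rather than just for the central element $K$; this is exactly where shape-regularity (the local quasi-uniformity of a shape-regular family, equivalently taking $H_K$ to be the smallest element diameter occurring in $S(K)$) is used, and it is the only nontrivial step, the rest being the elementary disjoint-ball volume count.
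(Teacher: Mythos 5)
Your proof is correct and follows essentially the same route as the paper's: both arguments pack the pairwise disjoint elements of the patch (via their inscribed balls of radius $\rho_{K'}\ge h_{K'}/\sigma$) into the enclosing ball $B_R$ furnished by \textbf{Assumption A} or \textbf{Assumption A'}, use one sampling point per element to convert the volume count into a bound on $\#\mc{I}(K)$, and cancel the unit-ball volume to obtain $\#\mc{I}(K)\le\Lr{\sigma R/H_K}^d$. If anything, you are more careful than the paper, which writes $\#\mc{I}(K)\abs{K}\le\text{Vol\;}B_d(R)$ and thereby leaves tacit exactly the step you flag, namely that $h_{K'}\ge H_K$ must hold uniformly over the patch (via local quasi-uniformity of a shape-regular conforming mesh, or by reading $H_K$ as the smallest element diameter in $S(K)$).
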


\begin{proof}
For any element $K\in\mc{T}_H$, using \textbf{Assumption A} or \textbf{Assumption A'} and note that there is only one sample point inside each element $K$, we obtain
\[
\#\mc{I}(K)\abs{K}\le\text{Vol\;}B_d(R),
\]
where $\text{Vol\;}B_d(R)$ stands for the volume of a $d$-dimensional ball with radius $R$.

Using the shape regularity of $\mc{T}_H$, we obtain
\[
\abs{K}\ge\text{Vol\;}B_d(\rho_K)\ge\sigma^{-d}\text{Vol\;}B_d(h_K).
\]

A combination of the above two inequalities yields~\eqref{eq:upp}.
\end{proof}

It is clear that the upper bound~\eqref{eq:upp} is independent of the way for construction $S(K)$. For the two ways based on either the Moore neighbor or the von Neumann neighbor, we
have $\#\mc{I}(K)\simeq t^d$ with $t$ the recursion depth, which is consistent with the corresponding upper bound proved in~\cite[Lemma 3.4]{LiMingTang:2012} and~\cite[Lemma 6]{Li:2019}, in which the two-dimensional problem has been dealt with.
Both Lemma~\ref{lema:bd} and Lemma~\ref{lema:cardinality} require that $\#\mc{I}(K)$ should be quite large, equivalently, $S(K)$ is large, so that the uniform boundedness of $\Lam(m,\mc{I}(K))$ is valid. In numerical tests below, we observe that the method still works quite well even when $\#\mc{I}(K)$ is far less than the theoretical threshold. We refer to~\cite{LiMingTang:2012} for a list of the size of $S(K)$ and the upper bound of $\max_{K\in\mc{T}_H}\Lam(m,\mc{I}(K))$.

Based on the above three lemmas, we are ready to estimate~\eqref{eq:decmod}.
\begin{lemma}
If \textbf{Assumption A} or \textbf{Assumption A'}  and \textbf{Assumption B} are valid and the effective matrix $\mc{A}_{ij}\in C^{m+1}(D)$, then there exists $C$ that depends on $\nm{\mc{A}_{ij}}{C^{m+1}(D)},m,R,r,\gamma$ and $t$ but independent of $H$.
\begin{equation}\label{eq:estmatrix}
e(\hmm)\le C\Lr{H^{m+1}+e_1(\hmm)},
\end{equation}
where $e_1(\hmm){:}=\max_{x\in\mc{I}(K),K\in\mc{T}_H}\nm{(\mc{A}-A_H)(x)}{F}$.
\end{lemma}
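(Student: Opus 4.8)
The plan is to read the splitting~\eqref{eq:decmod} entrywise and bound the two resulting contributions separately. Fix $K\in\mc{T}_H$ and indices $i,j$, and write $g=\mc{A}_{ij}$; since $(A_H)_{ij}=\mc{R}_K(\overline{A}_H)_{ij}$ by the definitions~\eqref{eq:ls} and~\eqref{eq:ls2}, linearity of $\mc{R}_K$ gives on $K$
\[
\mc{A}_{ij}-(A_H)_{ij}=\Lr{g-\mc{R}_Kg}+\mc{R}_K\Lr{g-(\overline{A}_H)_{ij}}.
\]
The first summand is the reconstruction error, the second the estimation error. All three supporting lemmas (Lemma~\ref{lema:reconstruction}, Lemma~\ref{lema:bd}, Lemma~\ref{lema:cardinality}) hold under either assumption set, so the relevant prefactors $\Lam(m,\mc{I}(K))$ and $\sqrt{\#\mc{I}(K)}$ are at hand.

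For the reconstruction error I would start from the quasi-optimal estimate~\eqref{eq:app-rec} and control the best-approximation term by a standard averaged-Taylor/Bramble--Hilbert bound,
\[
\inf_{p\in\mb{P}_m(S(K))}\nm{g-p}{L^\infty(S(K))}\le C(\text{diam}\,S(K))^{m+1}\abs{g}_{W^{m+1,\infty}(S(K))},
\]
which is exactly where \textbf{Assumption A} re-enters: the star-shapedness of $S(K)$ with respect to $B_r$ supplies this estimate with $C$ depending only on the chunkiness $R/r$ and on $m$, and $\abs{g}_{W^{m+1,\infty}(S(K))}\le\nm{\mc{A}_{ij}}{C^{m+1}(D)}$. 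The step I expect to be the main obstacle is the bookkeeping that reconciles the growing factor $\sqrt{\#\mc{I}(K)}$ with the shrinking factor $(\text{diam}\,S(K))^{m+1}$. By Lemma~\ref{lema:cardinality}, $\sqrt{\#\mc{I}(K)}\le(\sigma R/H_K)^{d/2}$, and the relation $R\simeq tH_K$ recorded after Lemma~\ref{lema:bd} gives $R/H_K\simeq t$, so this factor is bounded by $(\sigma t)^{d/2}$ uniformly in $H$; meanwhile $\text{diam}\,S(K)\le 2R\le CtH_K$ yields $(\text{diam}\,S(K))^{m+1}\le C(tH_K)^{m+1}$, and $\Lam(m,\mc{I}(K))$ is bounded by Lemma~\ref{lema:bd}. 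Multiplying these three bounds collapses the $H_K$-dependence to $H^{m+1}$ and produces $\nm{g-\mc{R}_Kg}{L^\infty(K)}\le CH^{m+1}$ with $C$ independent of $H$. Under \textbf{Assumption A'} the same chain runs with~\eqref{eq:uniform2} replacing~\eqref{eq:uniform1}, and a cuspidal patch is handled by~\eqref{eq:cusp} at the price of $t\simeq m^\gamma$, which is why $C$ is allowed to depend on $\gamma$ and $t$.

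For the estimation error I would apply the stability bound~\eqref{eq:stable} to $g-(\overline{A}_H)_{ij}$. Because $\mc{R}_K$ depends only on nodal values and $(\overline{A}_H)_{ij}(x_K)=(A_H(x_K))_{ij}$ by the definition of $\overline{A}_H$, the nodal data of $g-(\overline{A}_H)_{ij}$ are precisely $(\mc{A}-A_H)(x_K)_{ij}$, whence
\[
\nm{\mc{R}_K(g-(\overline{A}_H)_{ij})}{L^\infty(K)}\le\Lam(m,\mc{I}(K))\sqrt{\#\mc{I}(K)}\max_{x_K\in\mc{I}(K)}\abs{(\mc{A}-A_H)(x_K)_{ij}}\le Ce_1(\hmm),
\]
using $\abs{(\mc{A}-A_H)(x_K)_{ij}}\le\nm{(\mc{A}-A_H)(x_K)}{F}\le e_1(\hmm)$ and the same uniform bound on the prefactor. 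Combining the two estimates, taking the maximum over $x\in K$ and over $K\in\mc{T}_H$, and passing from the entrywise bounds to the Frobenius norm by the elementary inequality $\nm{B}{F}\le d\max_{i,j}\abs{B_{ij}}$, valid for any $d\times d$ matrix $B$, I absorb the $d^2$ entries into $C$ and arrive at $e(\hmm)\le C(H^{m+1}+e_1(\hmm))$.
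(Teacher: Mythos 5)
Your proof is correct and follows essentially the same route as the paper's: the entrywise use of the decomposition~\eqref{eq:decmod}, the reconstruction error bounded via the quasi-optimality estimate~\eqref{eq:app-rec} and the estimation error via the stability bound~\eqref{eq:stable}, with the prefactors $\Lam(m,\mc{I}(K))$ and $\sqrt{\#\mc{I}(K)}$ controlled by Lemma~\ref{lema:bd} and Lemma~\ref{lema:cardinality}. The only difference is that you spell out the Taylor/Bramble--Hilbert step for the best approximation and the $R\simeq tH_K$ bookkeeping, which the paper leaves implicit; this is a faithful filling-in of details rather than a different argument.
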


It is worthwhile to mention that $e_1(\hmm)$ is the so-called estimating error in HMM~\cite{EE:2003}. There are many works devoted to bounding $e_1(\hmm)$ and developing new algorithms to improve the
estimates; see; e.g.,~\cite{EMingZhang:2005, Yue:2007, Blanc:2010, Gloria:2011, Olof:2016, Gloria:2016, Mourrat:2019} and the references therein.

\begin{proof}
We start from the decomposition~\eqref{eq:decmod}. For each $K\in\mc{T}_H$, using \textbf{Assumption A} and \textbf{Assumption B} or \textbf{Assumption A'} and \textbf{Assumption B}, Lemma~\ref{lema:bd}, Lemma~\ref{lema:cardinality} and ~\eqref{eq:app-rec}, we obtain, on each element $K\in\mc{T}_H$, there exists $C$ that depends on $t,\sigma,r,R,d$ and $\nm{\mc{A}_{ij}}{C^{m+1}(D)}$ such that
\[
\nm{\mc{A}_{ij}-(\mc{R}_K\mc{A})_{ij}}{L^\infty(K)}\le C\inf_{p\in\mb{P}_m(S(K))}
\nm{\mc{A}_{ij}(x)-p}{L^\infty(S(K))}\le CH^{m+1}.
\]
Summing up all $i,j=1,\cdots,d$ and $K\in\mc{T}_H$ we obtain
\begin{equation}\label{eq:accuracy-rec}
\max_{x\in D}\nm{\mc{A}(x)-\mc{R}\mc{A}(x)}{F}\le CH^{m+1}.
\end{equation}

Using~\eqref{eq:stable}, we obtain, for each $K\in\mc{T}_H$,
\[
\nm{\mc{R}_K\Lr{\mc{A}_{ij}-(\overline{A}_H)_{ij}}}{L^\infty(K)}\le\Lam(m,\mc{I}(K))\sqrt{\#\mc{I}(K)}
\nm{\Lr{\mc{A}_{ij}-(A_H)_{ij}}|_{\mc{I}(K)}}{\ell_\infty}.
\]
Summing up all $i,j=1,\cdots,d$, we obtain
\[
\max_{x\in K}\nm{\mc{R}_K(\mc{A}-A_H)(x)}{F}^2
\le\Lam^2(m,\mc{I}(K))\#\mc{I}(K)\max_{x\in\mc{I}(K)}\nm{\mc{A}(x)-A_H(x)}{F}^2.
\]
Using \textbf{Assumption A} and \textbf{Assumption B} or \textbf{Assumption A'} and \textbf{Assumption B}, Lemma~\ref{lema:bd} and Lemma~\ref{lema:cardinality}, there exists $C$ depends on $R,r,t$ and $\sigma$ such that
\[
\max_{x\in D}\nm{\mc{R}(\mc{A}-A_H)}{F}\le Ce_1(\hmm),
\]
which together with~\eqref{eq:accuracy-rec} gives~\eqref{eq:estmatrix} and finishes the proof.
\end{proof}

Substituting the estimate~\eqref{eq:estmatrix} into Lemma~\ref{lema:1staccuracy}, we obtain the main result of this part.
\begin{theorem}\label{thm:main}
Let $u_0$ be the homogenized solution with $\nm{u_0}{H^{ l+1}(D)}<\infty$. If \textbf{Assumption A} or \textbf{Assumption A'} and \textbf{Assumption B} hold, and $e_1(\hmm)<\al/4$, then there
exists a unique solution $u_h$ of Problem~\eqref{eq:online} that satisfying
\begin{equation}\label{eq:enerfinal}
\nm{\na(u_0-u_h)}{L^2(D)}\le C\Lr{h^l+H^{m+1}+e_1(\hmm)}.
\end{equation}

Moreover, if the solution of the auxiliary problem~\eqref{eq:auxprob} admits a unique solution $\phi_g$ satisfying the regularity estimate $\nm{\phi_g}{H^2(D)}\le C\nm{g}{L^2(D)}$, then
\begin{equation}\label{eq:l2final}
\nm{u_0-u_h}{L^2(D)}\le C\Lr{h^{l+1}+H^{m+1}+e_1(\hmm)}.
\end{equation}
\end{theorem}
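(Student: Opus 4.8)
The plan is to assemble the stated result from the abstract error bound of Lemma~\ref{lema:1staccuracy}, the matrix estimate~\eqref{eq:estmatrix}, and the standard $\mb{P}_l$ interpolation error on the shape-regular online triangulation $\tau_h$. First I would settle well-posedness: by~\eqref{eq:estmatrix} we have $e(\hmm)\le C\Lr{H^{m+1}+e_1(\hmm)}$, so the hypothesis $e_1(\hmm)<\al/4$, together with $H$ taken below a threshold $H_0$ determined by the constant $C$ and by $\al$, forces $e(\hmm)<\al$; the first assertion of Lemma~\ref{lema:1staccuracy} then yields a unique $u_h\in V_h$ solving~\eqref{eq:online}. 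For the quantitative estimates I would simply fix $\nu=1$ in Lemma~\ref{lema:1staccuracy}, so that the requirement sharpens to $e(\hmm)<\al/2$, again secured for small $H$.

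For the energy estimate I would start from~\eqref{eq:h1err} with $\nu=1$, which bounds $\nm{\na(u_0-u_h)}{L^2(D)}$ by $\tfrac{\beta}{\al}\inf_{v\in V_h}\nm{\na(u_0-v)}{L^2(D)}$ plus $\tfrac{2C_p}{\al^2}\nm{f}{H^{-1}(D)}\,e(\hmm)$. The infimum is controlled by the classical interpolation estimate for shape-regular triangulations, $\inf_{v\in V_h}\nm{\na(u_0-v)}{L^2(D)}\le Ch^l\nm{u_0}{H^{l+1}(D)}$, which is finite by hypothesis, while the estimation term is controlled directly by~\eqref{eq:estmatrix}. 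Collecting the two contributions delivers~\eqref{eq:enerfinal}.

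For the $L^2$ estimate I would invoke the Aubin--Nitsche bound already packaged in the second part of Lemma~\ref{lema:1staccuracy}. The primal factor is again $\inf_{v\in V_h}\nm{\na(u_0-v)}{L^2(D)}\le Ch^l\nm{u_0}{H^{l+1}(D)}$. The decisive new input is the dual factor: under the posited regularity $\nm{\phi_g}{H^2(D)}\le C\nm{g}{L^2(D)}$ for the auxiliary problem~\eqref{eq:auxprob}, the interpolation estimate (using $l\ge 1$) gives $\inf_{\chi\in V_h}\nm{\na(\phi_g-\chi)}{L^2(D)}\le Ch\nm{\phi_g}{H^2(D)}\le Ch\nm{g}{L^2(D)}$, whence $\sup_{\nm{g}{L^2(D)}=1}\inf_{\chi\in V_h}\nm{\na(\phi_g-\chi)}{L^2(D)}\le Ch$. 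Multiplying the primal and dual factors produces the order $h^{l+1}$, and adding $e(\hmm)\le C\Lr{H^{m+1}+e_1(\hmm)}$ from~\eqref{eq:estmatrix} yields~\eqref{eq:l2final}.

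The only genuinely delicate point is the bookkeeping in the first step: the bound $e_1(\hmm)<\al/4$ does not by itself control the constant $C$ in~\eqref{eq:estmatrix}, so the coercivity thresholds $e(\hmm)<\al$ and $e(\hmm)<\al/2$ must be read as also requiring $H\le H_0$, with $H_0$ depending on that constant and on $\al$. Beyond this caveat, the argument is a routine synthesis of C\'ea's lemma, the two interpolation estimates, and the two previously established bounds, and it introduces no new analytic difficulty.
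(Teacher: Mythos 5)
Your proposal is correct and follows essentially the same route as the paper, which proves Theorem~\ref{thm:main} precisely by substituting the matrix estimate~\eqref{eq:estmatrix} into Lemma~\ref{lema:1staccuracy} and invoking the standard $\mb{P}_l$ interpolation estimates for the primal and dual (Aubin--Nitsche) factors. Your closing caveat---that $e_1(\hmm)<\al/4$ alone does not guarantee $e(\hmm)<\al$ because of the generic constant in~\eqref{eq:estmatrix}, so smallness of $H$ is implicitly required---is a legitimate bookkeeping point that the paper leaves tacit, and making it explicit only strengthens the argument.
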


In view of the above error estimates~\eqref{eq:enerfinal} and~\eqref{eq:l2final}, we may have $H\simeq h^{l/(m+1)}$ or $H\simeq h^{(l+1)/(m+1)}$. For a fixed $h$, one may increase $m$ to decrease the cost in the offline stage. This suggests that higher order reconstruction is preferred to save cost while without loss of accuracy, which is confirmed by the tests in the next section. 

The error estimate for HMM-LS method in~\cite{LiMingTang:2012} reads as
\begin{equation}\label{eq:oldest}
\left\{\begin{aligned}
\nm{\na(u_0-\wt{u}_h)}{L^2(D)}&\le C\Lr{h^l+h^{m+1}+e_1(\hmm)},\\
\nm{u_0-\wt{u}_h}{L^2(D)}&\le C\Lr{h^{l+1}+h^{m+1}+e_1(\hmm)},
\end{aligned}\right.
\end{equation}
where $\wt{u}_h$ is the solution of HMM-LS method. For a fixed $h$, the above error estimate indicates that $m\simeq l$ to balance the error provided that $e_1(\hmm)$ is sufficiently small, the number of the cell problems in this case is the total number of the vertices of the online triangulation $\tau_h$. We shall compare these two methods in the next part.
\section{Numerical Results}\label{sec:test}
In this section, we report a few numerical examples to test the accuracy and efficiency of the proposed offline-online method. The examples include the problems with locally periodic coefficients, with almost periodic coefficients and with random coefficients. We also test problems posed on L-shape domain in two dimension and three dimension for which the solutions are usually nonsmooth. 

In all the tests, the offline triangulations $\mc{T}_H$ are uniform partitions of  $D$, and we list the threshold number ($N_{\text{lowest}}$ is defined in \S~\ref{sec:method}) of the sampling points for different orders of reconstruction in Table~\ref{table:number}, the number $\#\mc{I}(K)$ should be slightly larger than the number that lists in the table when $K$ abuts the boundary of the domain. For $d=3$, we only report $m=3$ because this is the only case we test in this part, the examples for the lower order reconstruction may be found in~\cite{Huang:2018}. 
%
\begin{table}[h]
	\caption{The number of the threshold number for the sampling point in reconstruction.}
	\label{table:number}
	\vspace{.2cm}
	\centering
	\begin{tabular}{|c||c|c|c|}
		\hline
		&$m=1$& $m=2$ &$m=3$\\
		\hline
		$d=2$ & 5& 7&13\\	
		\hline	
		$d=3$&NONE&NONE&27\\
		\hline
\end{tabular}
\end{table}

Noted that the theory covers the case when  $S(K)$ consists of polygons or polytopes, we refer to~\cite{Li:2019} for the demonstration of such patches. It is worth pointing out that the nonuniform $\mc{T}_H$ is useful when the shape of $D$ is very complicate. In that case, the sampling points near the boundary has to be dense to ensure the accuracy of the reconstruction. We refer to~\cite{Huang:2018} for the examples of nonuniform patch, which may further reduce the number of the cell problems and makes $e(\text{MOD})$ more evenly distributed over the whole domain, which is useful for adaptivity. 

The finite element solvers except Example~\ref{ex:3d} are carried on FreeFem++ toolbox~\cite{Hecht:2012}\footnote{https://freefem.org/}, and the test for Example~\ref{ex:3d} is performed in a parallel hierarchical grid platform (PHG)~\cite{Zhang:2009}\footnote{http://lsec.cc.ac.cn/phg/}. The error of the method is calculated by the relative error measured in $H^1$ norm and $L^2$ norm:
\[
\dfrac{\nm{\na(u_0-u_h)}{L^2(D)}}{\nm{\na u_0}{L^2(D)}}\qquad\text{and}\qquad\dfrac{\nm{u_0-u_h}{L^2(D)}}{\nm{u_0}{L^2(D)}}.
\]
The \textit{exact} homogenized solution $u_0$ is generated by discretization~\eqref{eq:homo} with $\mb{P}_2$ FEM over a very refined $500\x 500$ mesh in most cases unless otherwise stated. In all the tests we set $\eps=10^{-6}$.
\subsection{Problems with local periodic coefficient in two dimension}
We consider an example with locally periodical coefficient in $d=2$, which is taken from~\cite{MingYue:2006}.
\begin{example}\label{ex:2dtang}
\[
\left\{\begin{aligned}
     -\div(\a\na u^{\epsilon}(x))&=f(x),\quad\quad   &&\text{in} \quad D,\\
     u^{\epsilon}(x)&=0, \quad &&\text{on} \quad \pa D,
   \end{aligned}\right.
\]
where $D =(0,1)^2$, $f(x)=1$ and
\[
     \a(x)=\frac{(2.5+1.5\sin(2\pi x_1))(2.5+1.5\cos(2\pi x_2))}{(2.5+1.5\sin(2\pi x_1/\eps))(2.5+1.5\sin(2\pi x_2/\epsilon))}1_{2\x 2},
\]
where $1_{2\x 2}$ denotes the $2-$by$-2$ identity matrix. The homogenization problem is
\begin{equation}\label{eq:2dhomo}
\left\{\begin {aligned}
     -\div(\mc{A}(x)\na u_0(x))&=f(x),\quad\quad   &&\text{in} \quad D,\\
     u_0(x)&=0, \quad \quad&&\text{on} \quad \pa D.
   \end {aligned}\right.
\end{equation}
A direct calculation gives the following analytical expression of $\mc{A}$:
\begin{equation}\label{eq:expeff}
  \mc{A}(x_1,x_2)=\dfrac15(2.5+1.5\sin(2\pi x_1))(2.5+1.5\cos(2\pi x_2))1_{2\x 2}.
\end{equation}
\end{example}

The offline triangulation $\mc{T}_H$ consists of a uniform $Q\x Q$ squares. To obtain the online triangulation $\tau_h$, we firstly triangulate $D$ into a uniform $N\x N$ squares and secondly divide each square into two sub-triangles along its diagonal with positive slope. To study the effect of the reconstruction, we use the analytical expression~\eqref{eq:expeff} of $\mc{A}$ for reconstruction so that $e_1(\text{MOD})=0$.  We denote this numerical solution by $u_h^0$. Fig.~\ref{fig:periodeHMM} presents the accuracy of the offline computation, which corroborates the estimate~\eqref{eq:estmatrix}.
\begin{figure}[h]
	\centerline{\includegraphics[width=8cm]{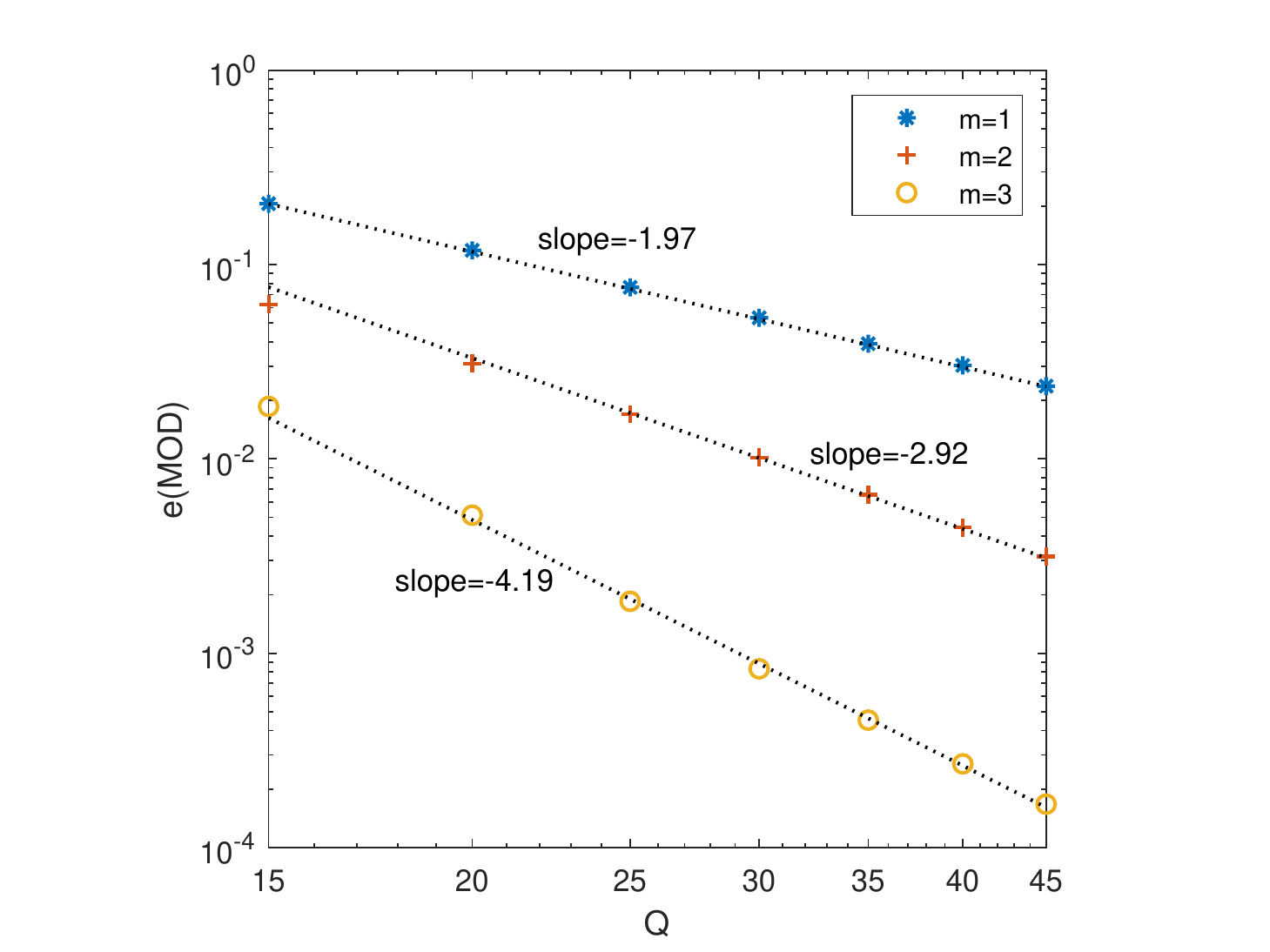}}
	\caption{$e(\text{MOD})$ for different reconstruction orders.}
	\label{fig:periodeHMM}
\end{figure}

To study the convergence rate of the method, it is more convenient to reshape the error estimates in Theorem~\ref{thm:main} in terms of $N$ and $Q$ as follows.
\begin{equation}\label{eq:refinement}
\left\{\begin{aligned}
\nm{\na(u_0-u_h)}{L^2(D)}&\le C(N^{-l}+Q^{-(m+1)}),\\
\nm{u_0-u_h}{L^2(D)}&\le C(N^{-(l+1)}+Q^{-(m+1)}).
\end{aligned}\right.
\end{equation}
Balancing the discretization error and the reconstruction error, we set $Q$ as
\[
Q=\left\{\begin {aligned}
     \mc{O}(N^{\frac{l}{m+1}}), &\quad H^1 \text{ error}, \\
     \mc{O}(N^{\frac{l+1}{m+1}}),&\quad  L^2  \text{ error}.
   \end{aligned}\right.
\]
Following this refinement strategy, we plot the relative $H^1$ error and $L^2$ error in Fig.~\ref{fig:periodrate}, which are consistent with the estimates~\eqref{eq:refinement}.
\begin{figure}[h]
	\begin{minipage}{0.48\linewidth}
		\centering
		\includegraphics[width=7cm]{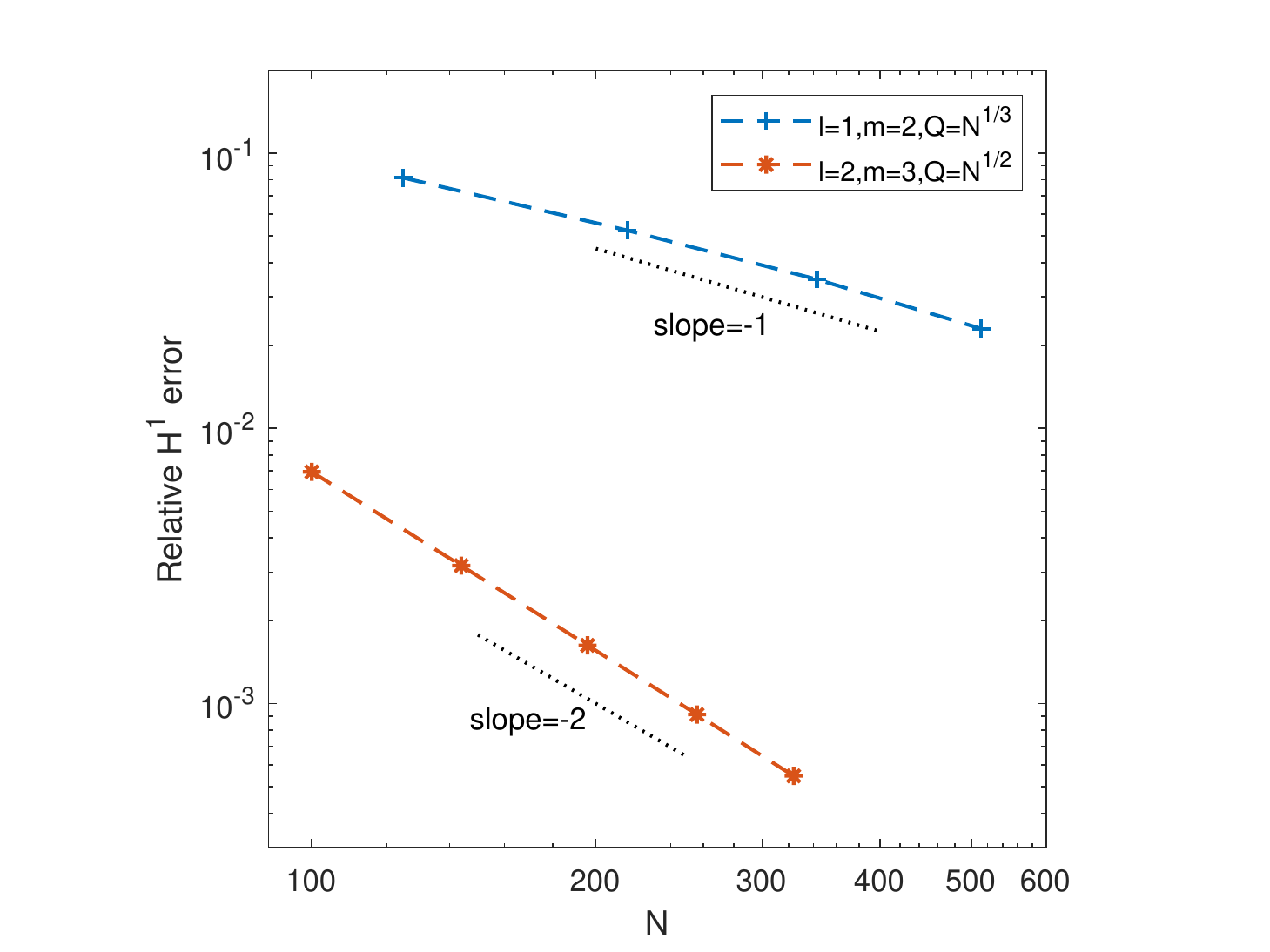}
		\center{(a). Rate of convergence in $H^1$.}
	\end{minipage}
	\hfill
	\begin{minipage}{0.48\linewidth}
		\centering
		\includegraphics[width=7cm]{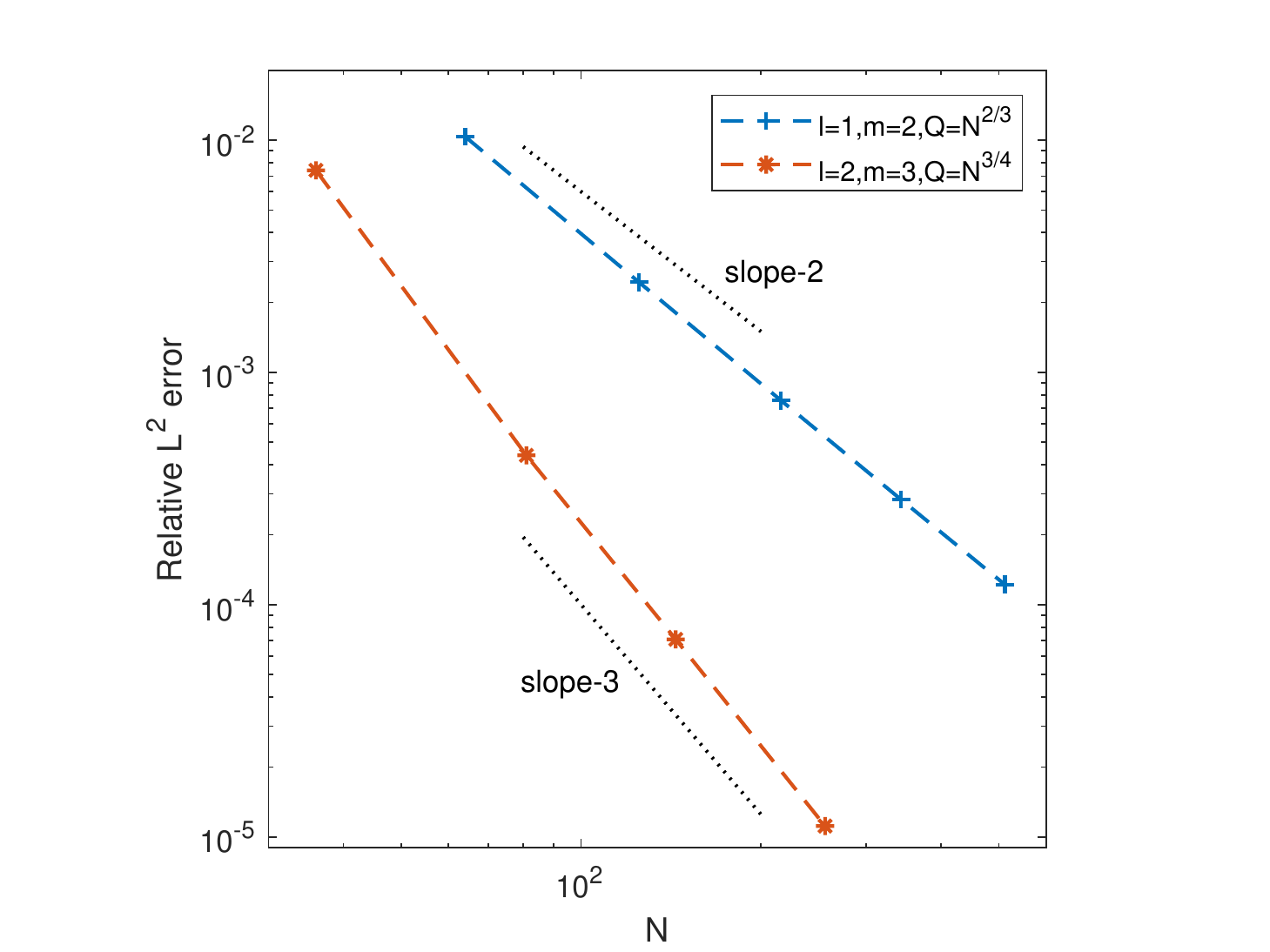}
		\center{(b). Rate of convergence in $L^2$.}
	\end{minipage}
	\caption{Rates of convergence for Example~\ref{ex:2dtang}.}
	\label{fig:periodrate}
\end{figure}

We may also fix the accuracy of the online solver and compare the effect of the reconstruction with different orders in the offline computation. Particularly we choose the online solver as $\mb{P}_2$ FEM with the meshsize parameter $N=100$. Fig.~\ref{fig:perioderr} clearly shows that the higher-order reconstruction is  more accurate with less cost.
\begin{figure}[h]
	\begin{minipage}{0.48\linewidth}
		\centering
		\includegraphics[width=7cm]{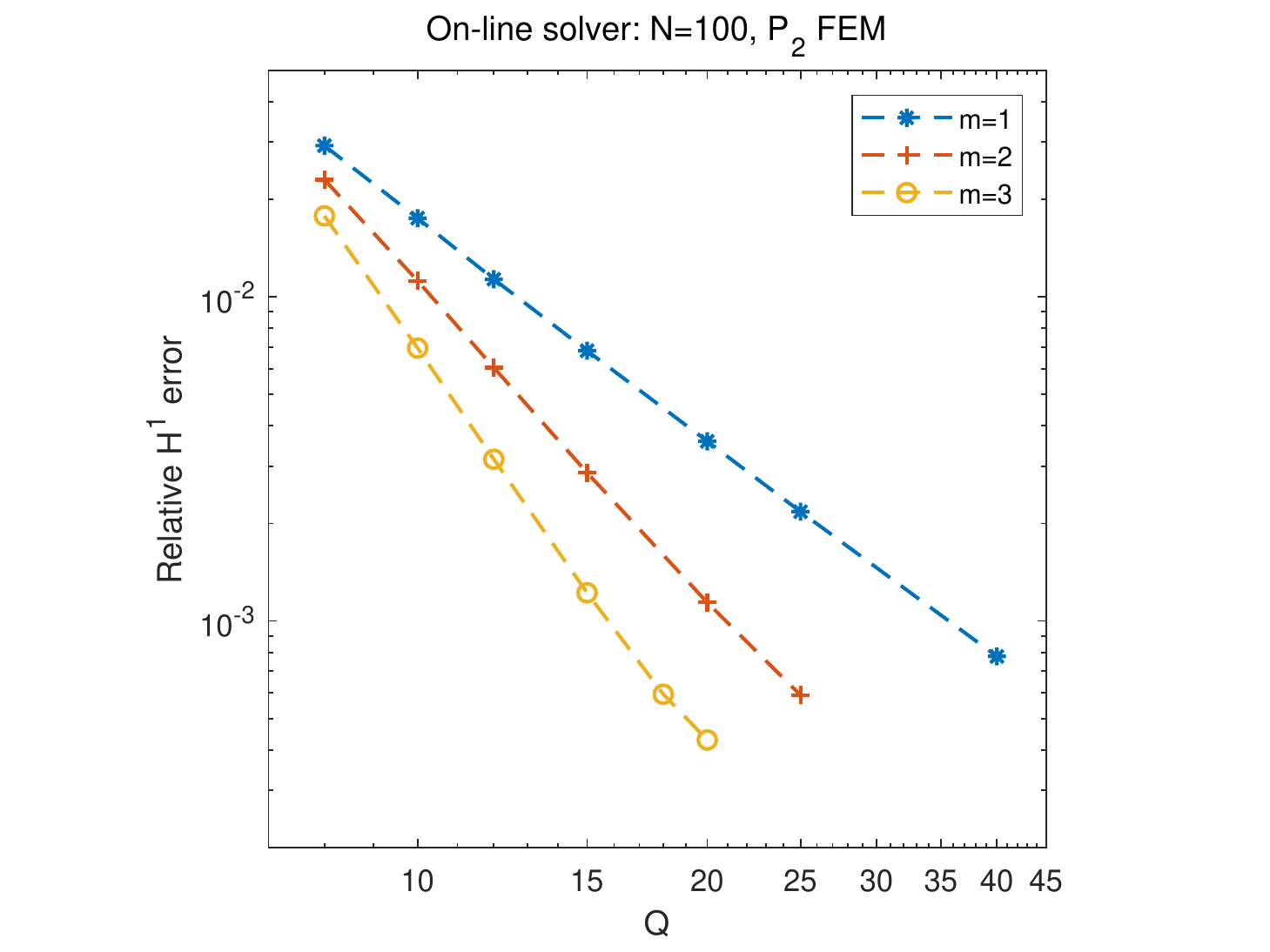}
\center{(a). The $H^1$ error with different reconstruction orders.}
	\end{minipage}
	\hfill
	\begin{minipage}{0.48\linewidth}
		\centering
		\includegraphics[width=7cm]{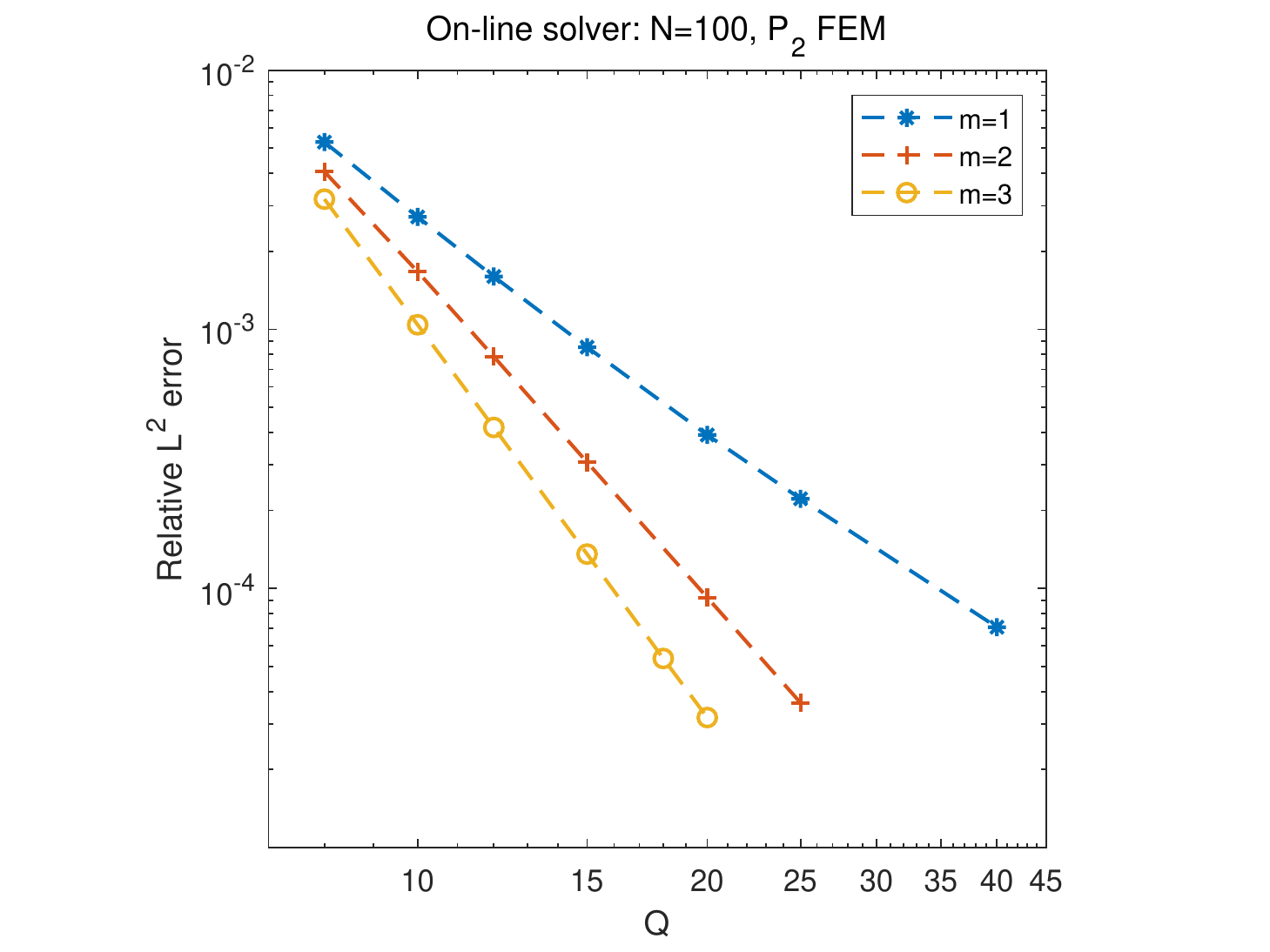}
\center{(b). The $L^2$ error with different reconstruction orders.}
	\end{minipage}
	\caption{The effect of $Q$ and the reconstruction orders in Example~\ref{ex:2dtang}; Online solver is $\mb{P}_2$ FEM and $N=100$.}
	\label{fig:perioderr}
\end{figure}

Next, we also fix the accuracy of the online solver and compare the number of the cell problems and the running time with reconstructions of different orders. The online solver is  still $\mb{P}_2$ FEM with the meshsize parameter $N=100$. Instead of using the analytical expression of $\mc{A}$ for reconstruction, we solve the periodic cell problems
\begin{equation}\label{eq:percell}
\left\{\begin{aligned}
-\div(\a\na \vv_i(x))&=0\qquad&&\text{in\quad}I_\eps,\\
\vv_i-x_i&\text{\quad is periodic}\qquad&&\text{on\quad}\pa I_\eps,
\end{aligned}\right.
\end{equation}
with $\mb{P}_2$ FEM, and employ~\eqref{eq:effective} to obtain $A_H$. We denote the reconstructed solution as $u_h^1$, which is very close to $u_h^0$ in the sense
\[
\dfrac{\nm{u_h^1-u_h^0}{L^2(D)}}{\nm{u_h^0}{L^2(D)}}\le 10^{-8}.
\]
This means that the $e_1(\text{MOD})$ is very small, though nonzero. Table~\ref{table:PeriodEx1} shows that the main cost comes from solving the cell problems, and higher-order reconstruction is more efficient in terms of both the accuracy and the efficiency.
\begin{table}[h]
	\caption{Comparison among reconstruction of different orders with the same online 
	  solver ($N=100$, $\mb{P}_2$ FEM).}
	\label{table:PeriodEx1}
	\vspace{.2cm}
	\centering
	\begin{tabular}{|c||c|c|c|c|c|}
		\hline
		& Q &$\sharp$ cell problems & Relative $H^1$ error &\begin{tabular}{c}Time\\(cell problems)\end{tabular}&Total Time\\
		\hline
		$m=1$ & 45& 2025&6.03e-4& 348.65s&363.27s\\	
		\hline	
		$m=2$ & 25& 625 &5.92e-4&106.97s&110.76s\\	
		\hline	
		$m=3$ & 18 & 324 &5.92e-4&54.5s&58.23\\	
		\hline	
	\end{tabular}
\end{table}

In the last test, we compare the offline-online method with HMM-LS method in~\cite{LiMingTang:2012}. The $H^1$ error bound in~\eqref{eq:oldest} may be rewritten as
\[
\nm{\na (u_0-\wt{u}_h)}{L^2(D)}\le C(N^{-l}+N^{-(m+1)}+e_1(\text{MOD})).
\]
We follow the setup in the last test to solve the cell problems so that $e_1(\text{MOD})$ is negligible in the above estimate. We take $l=2$ and $m=1$ to balance the error so that
\[
\nm{\na (u_0-\wt{u}_h)}{L^2(D)}\le C\,N^{-2}. 
\]
In the offline-online method, we use $\mb{P}_2$ FEM  as the macroscopic solver and choose the reconstruction order $m=2$ and $m=3$. To balance the error of ~\eqref{eq:refinement}, we take $Q=2N^{2/3}$ for $m=2$ and $Q=2.5N^{1/2}$ for $m=3$ and we obtain $\nm{\na (u_0-u_h)}{L^2(D)}\le C\,N^{-2}$. It seems that the relative $H^1$  errors for both methods reported in Fig.~\ref{fig:compare12} are comparable, which is consist with the theoretical estimates. The number of the cell problems and the running time plotted in Fig.~\ref{fig:compare12} demonstrate that the offline-online method is more efficient. This is easily understood because the number of the cell problems in the offline-online method is of $\mc{O}(N^{4/3})$ for $m=2$ and is of $\mc{O}(N)$ for $m=3$, while the number of the cell problems in HMM-LS is of $\mc{O}(N^2)$.
 \begin{figure}[h]\centering
\begin{minipage}{0.48\linewidth}
  \centerline{\includegraphics[width=7cm]{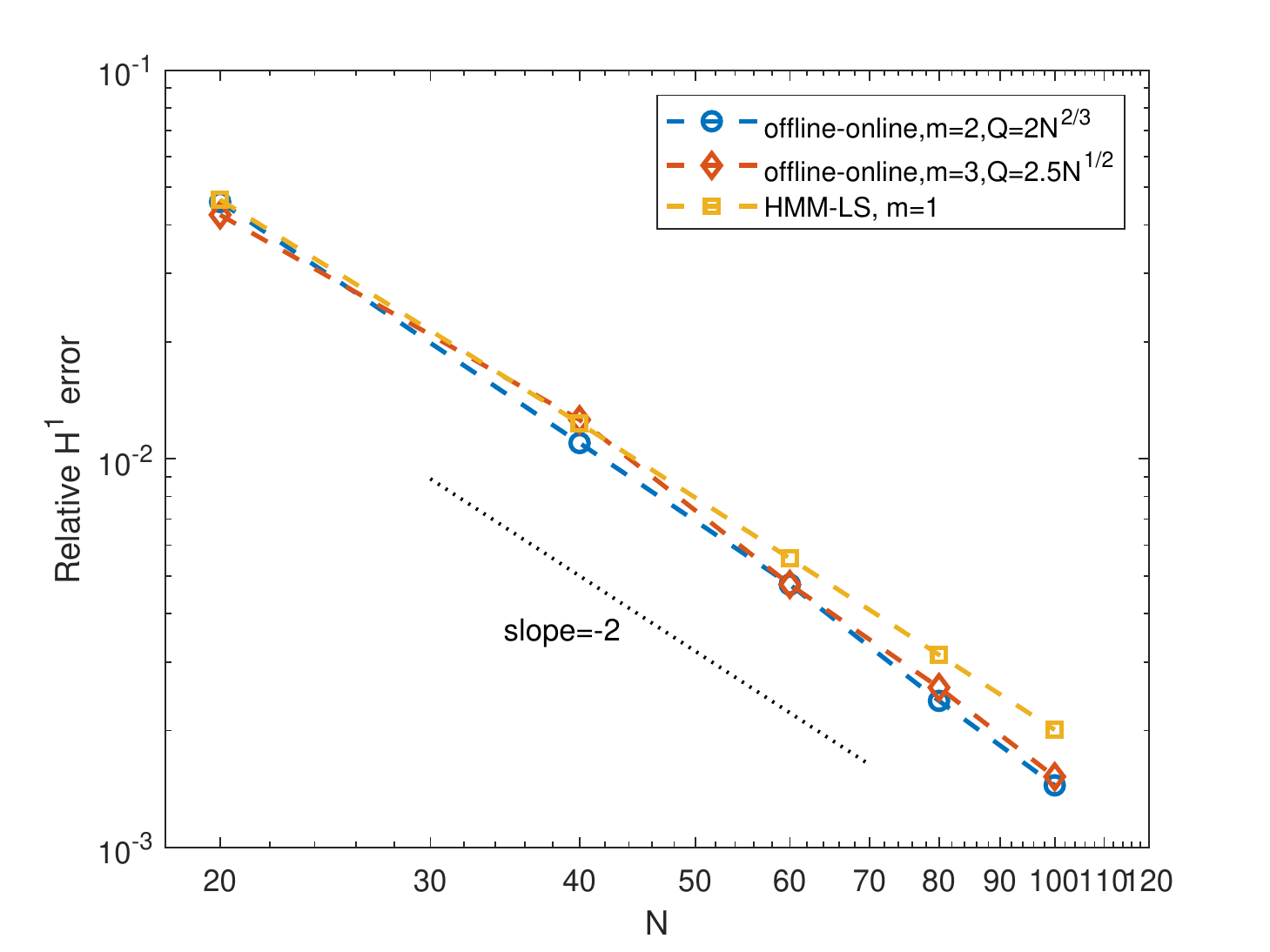}}
  \center{(a). Relative $H^1$ error.}
\end{minipage}
\hfill
\begin{minipage}{0.48\linewidth}
  \centerline{\includegraphics[width=7cm]{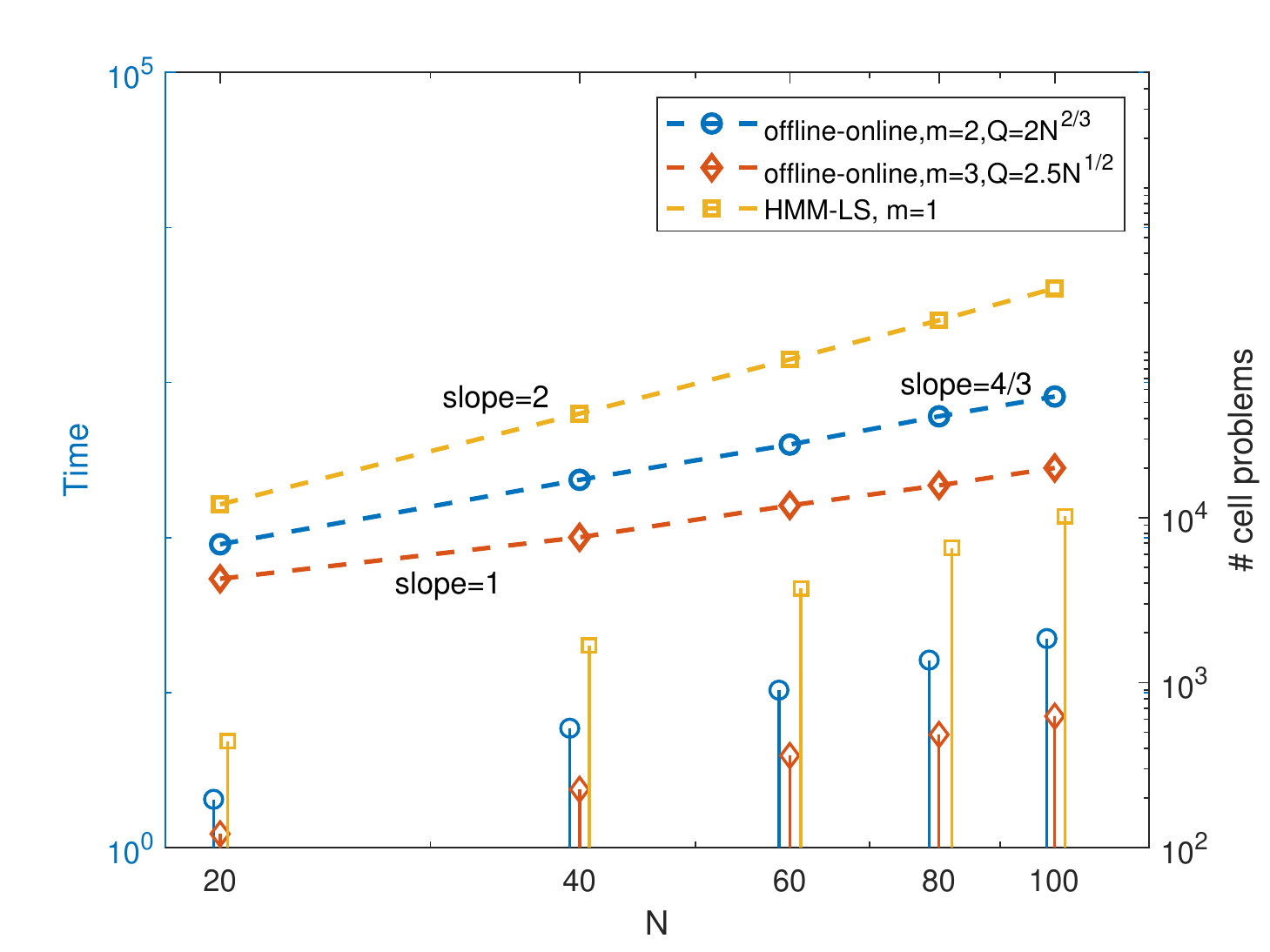}}
  \center{(b).  Running time and the number of the cell problems.}
\end{minipage}
\caption{Comparison between the offline-online method and  HMM-LS method in~\cite{LiMingTang:2012}.} \label{fig:compare12}
\end{figure}
\subsection{Problems posed on L-shape domain in $d=2,3$}
In this part, we test the offline-online strategy for a problem posed on L-shape domain. It is well-known that the adaptive 
strategy has to be used in the macroscopic solver because the solution $u_0$ is nonsmooth~\cite{Gris:1985}.
\begin{example}\label{ex:2dada}
The boundary value problem is the same with Example~\ref{ex:2dtang} except that the domain $D= (-1,1)^2\setminus[0,1]\times[-1,0]$. We let
\[
  u_0(x) = (x_1^2+x_2^2)^{1/3}.
\]
The inhomogeneous boundary condition $g(x)=u_0(x)|_{x\in\pa D}$ celland the source term $f$ is computed from the homogenized problem~\eqref{eq:2dhomo}$_1$.
\end{example} 

In the offline stage, the periodic cell problems~\eqref{eq:percell} are solved by $\mb{P}_2$ FEM over a mesh of size $50\times 50$. The third order reconstruction is used to obtain $A_H$. In the online stage, $\mb{P}_1$ FEM is used as the macroscopic solver. Then the error estimate roughly reads as
\begin{equation}\label{eq:2dada}
  \nm{\na(u_0-u_h)}{L^2(D)}\le C\Lr{\text{DOF}^{-1/2}+Q^{-4}+e_1(\text{MOD})},
\end{equation}
where DOF is the total degrees of freedom in the online computation
Given the error tolerance (TOL), in the offline stage we need to require
\[
   Q\lesssim\text{TOL}^{-1/4}\qquad\text{and}\qquad e_1(\text{MOD})\lesssim\text{TOL}.
\]
%

Firstly we set TOL$=10^{-2}$. The triangulation $\mc{T}_H$ is plot in Fig.~\ref{fig:l2d}a, we have $972$ sampling points in total. Under these settings,  $e(\text{MOD})=1.54E-02$ in the offline stage.
 let $\mc{T}_H$ be the initial mesh of the online computation. The mesh is refined by the strategy taken from~\cite{Hecht:2012}, and we plot the resulting mesh after $8$ iterations in Fig.~\ref{fig:l2d}b. 
\begin{figure}[htbp]\centering
	\begin{minipage}{0.48\linewidth}
	\centerline{\includegraphics[width=8cm]{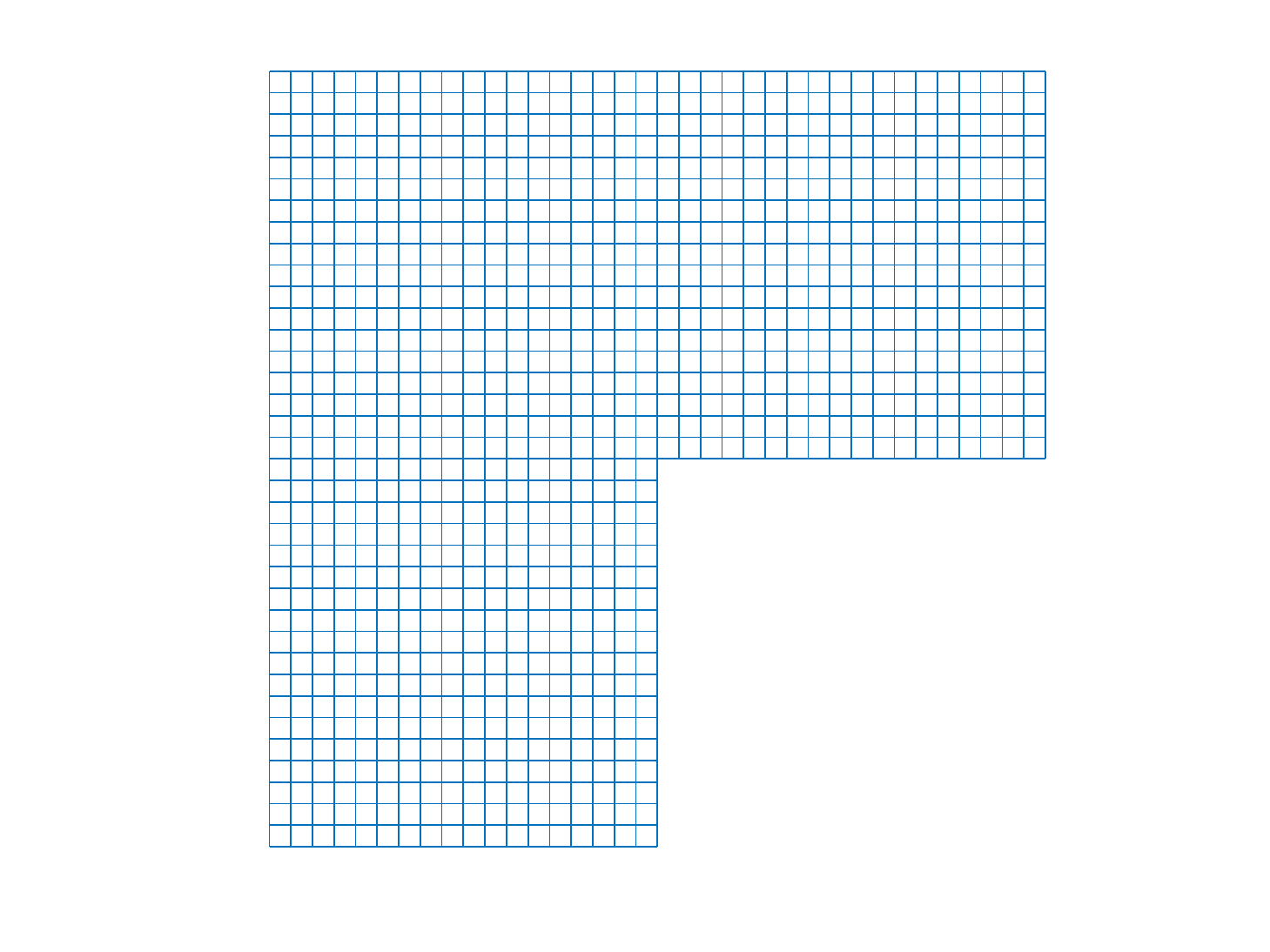}}
	\center{(a). Offline sampling mesh $\mathcal{T}_H$.}
\end{minipage}\hfill
\begin{minipage}{0.48\linewidth}
	\centerline{\includegraphics[width=8cm]{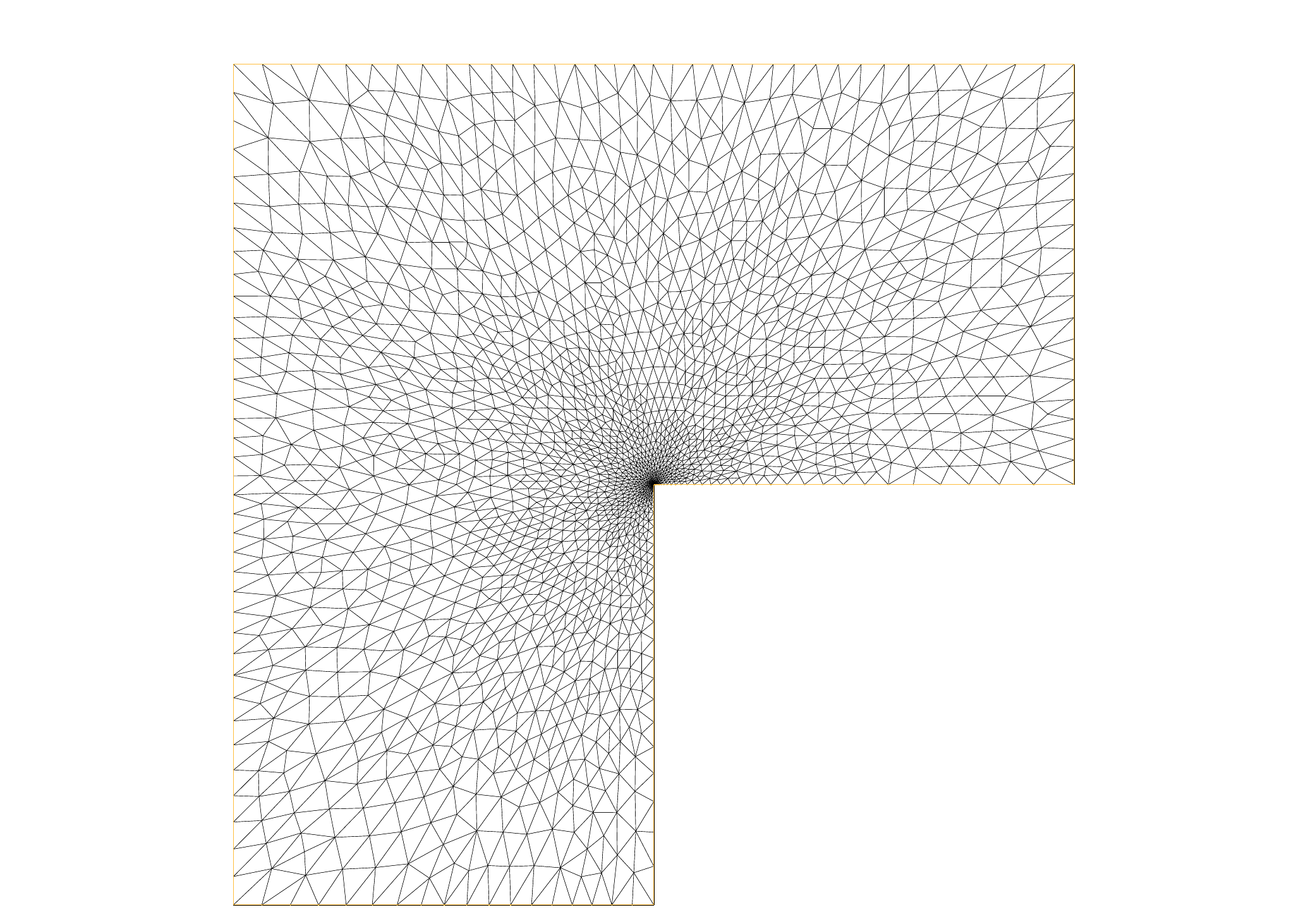}}
	\center{(b). Online adaptive mesh $\tau_h$ after $8$ iterations.}
	\end{minipage}
	\caption{Offline and online triangulations for L-shaped domain.}\label{fig:l2d}
\end{figure}	
The relative $H^1$ error is plot in Fig.~\ref{fig:2dada}a, and we observe that the error decays with optimal
rate of convergence $\mc{O}(\text{DOF}^{-1/2})$.
 \begin{figure}[h]\centering
\begin{minipage}{0.48\linewidth}
  \centerline{\includegraphics[width=7cm]{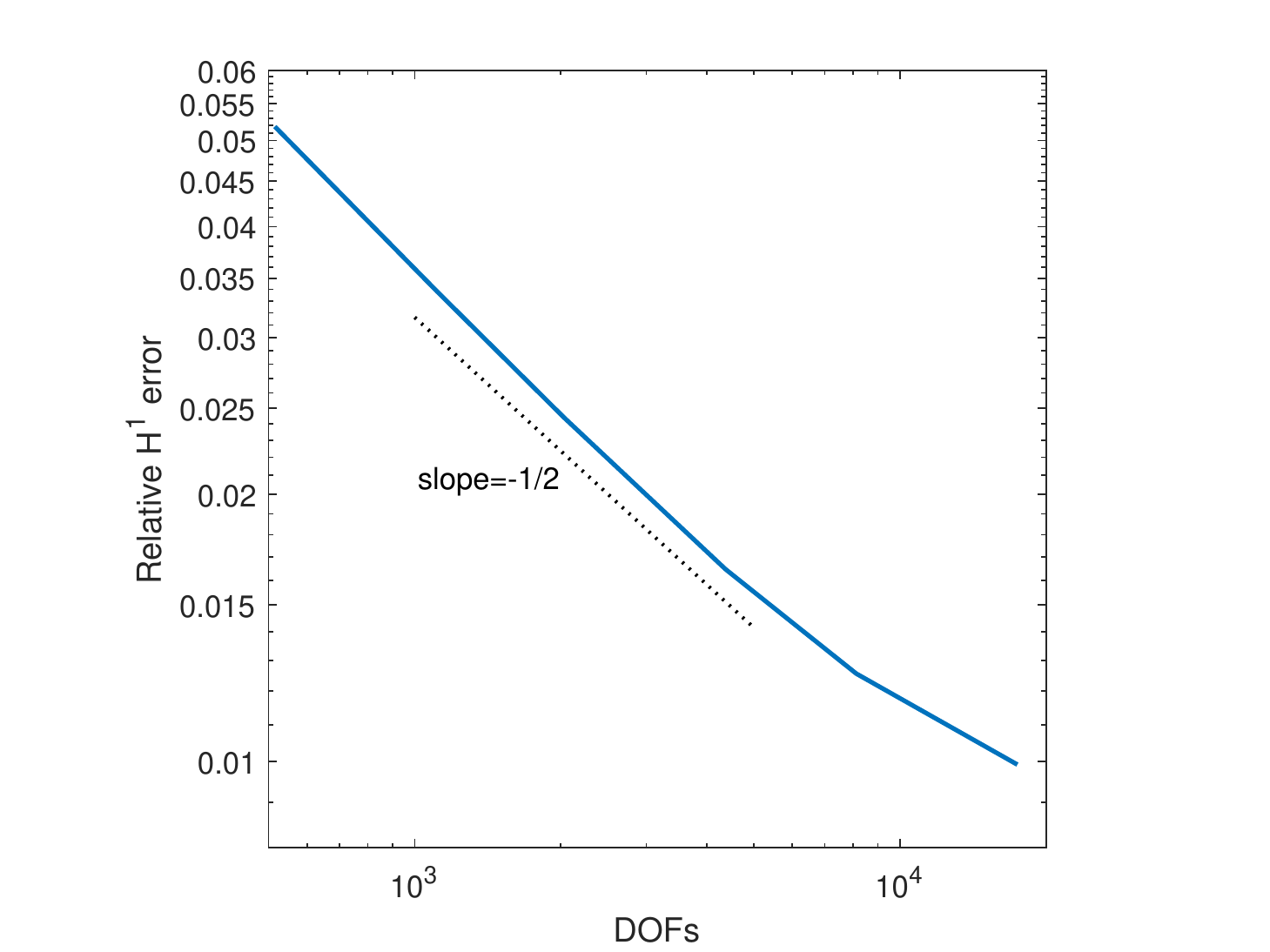}}
  \center{(a). Tolerance$=10^{-2}$.}
\end{minipage}
\hfill
\begin{minipage}{0.48\linewidth}
  \centerline{\includegraphics[width=7cm]{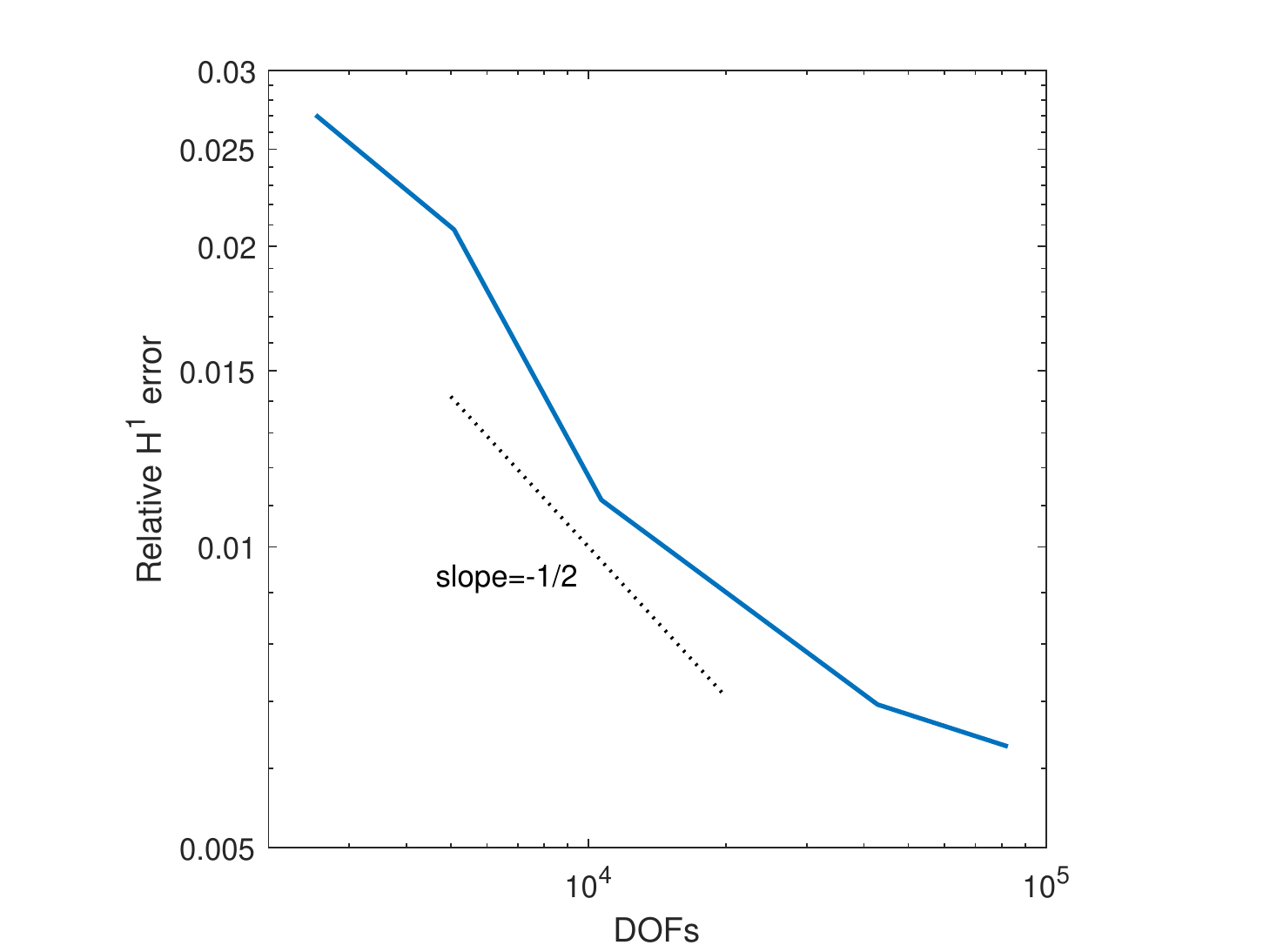}}
  \center{(b).  Tolerance$=10^{-3}$.}
\end{minipage}
\caption{The $H^1$ error of Example~\ref{ex:2dada}} \label{fig:2dada}
\end{figure}

Next we decrease the error tolerance TOL to $10^{-3}$, and we also use the third order reconstruction 
to approximate $\mc{A}$ and there are $4332$ sampling points in total. The error caused by the reconstruction is $e(\text{MOD})=9.48E-04$. The relative $H^1$ error is plot in Fig.~\ref{fig:2dada}b, and we observe that the error decays with optimal rate of convergence $\mc{O}(\text{DOF}^{-1/2})$.

Finally we  compare the accuracy and efficiency between our method and HMM. The offline computation is the same as the previous test, while in the online stage, we solve the homogenized problem with linear element over the same mesh as HMM. Such mesh  is obtained after $8$ iterations with $34394$ triangles from a uniform mesh dividing each of the uniform squares into two sub-triangles; see Fig.~\ref{fig:l2d}b. It is clear that the mesh refinement mostly takes place in the vicinity of the re-entrant corner. It follows from Table~\ref{table:Adaptcomp} that the offline-online method and HMM attain almost the same $H^1$ error, while the running time of the offline-online method is only around $2.46\%$ of HMM. This is easily understood because the number of the cell problems is only  $1.67\%$ of HMM.
\begin{table}[h]
	\caption{Comparison between the offline-online method and HMM.}\label{table:Adaptcomp}
	\vskip .1cm
	\centering
	\begin{tabular}{|c||c|c|c|}
		\hline
		  &$\sharp$ cell problems &Relative $H^1$ error & Time\\
		\hline
		Offline-Online & 576&3.53e-5 & 295.95s\\	
		\hline	
		HMM & 34394 &3.62e-5 &12020.6s \\	
		\hline	
	\end{tabular}
\end{table}

Next we consider a three-dimensional problem in L-shape domain.
\begin{example}\label{ex:3d}
The example is the same with Example~\ref{ex:2dtang} except that
\[
     \a(x)=\frac{(2.5+1.5\sin(2\pi x_1))(2.5+1.5\cos(2\pi x_2))(2.5+1.5\cos(2\pi x_3))}{(2.5+1.5\sin(2\pi x_1/\epsilon))(2.5+1.5\sin(2\pi x_2/\epsilon))(2.5+1.5\sin(2\pi x_3/\epsilon))}1_{3\x 3\x 3},
\]
and $D=(0,1)^3\setminus[0,0.5]^3$. The homogenization problem is the same with~\eqref{eq:2dhomo} with
the effective matrix as
\begin{equation}\label{eq:3dhomocoef}
  \mc{A}(x) = \dfrac15(2.5+1.5\sin(2\pi x_1))(2.5+1.5\cos(2\pi x_2))(2.5+1.5\cos(2\pi x_3))1_{3\x 3\x 3}.
\end{equation}
We let
\[
  u_0(x) = \left((x_1-0.5)^2+(x_2-0.5)^2+(x_3-0.5)^2\right)^{1/10},
\]
and
\[
  g(x) = u_0(x)|_{\pa D},\qquad\qquad f(x) = -\div(\mc{A}(x)\na u_0).
\]
\end{example}

We write the error estimate roughly as
\begin{equation}\label{HMM_LS_Spectral_ExTang1105}
  \nm{\na(u_0-u_h)}{L^2(D)}\leq C\Lr{\text{DOF}^{-1/3}+Q^{-(m+1)}+e^{-\beta N_s}}.
\end{equation}
where DOF is the total degrees of freedom in the online computation, and the factor $e^{-\beta N_s}$ comes from the approximation error caused by using Fourier spectral method~\cite{ShenTangWang:2011} to solve the periodic cell problems~\eqref{eq:percell}, and $\beta$ is a universal constant and $N_s$ is the points we used in each cell.  
The offline refinement strategy reads as
\[
   Q\lesssim\text{TOl}^{-\frac{1}{m+1}}\qquad\text{and\qquad} e^{-\beta N_s}\lesssim\text{TOL}.
\]
We set TOL$=3E-3$ and $m=3$. We firstly divide the domain into $7$ subdomain as in the last example, next we triangulate each subdomain by a uniform mesh with $Q=33$. There are $7623$ sampling points in total; see Fig.~\ref{fig:3d}a. We choose $N_s=25$. 
%
The $H^1$ error is plot in Fig.~\ref{fig:3d}b. We observe that the $H^1$ error decays with an optimal convergence rate $\mc{O}(\text{DOF}^{-1/3})$.
\begin{figure}[h]\centering
\begin{minipage}{0.46\linewidth}
\includegraphics[width=6cm]{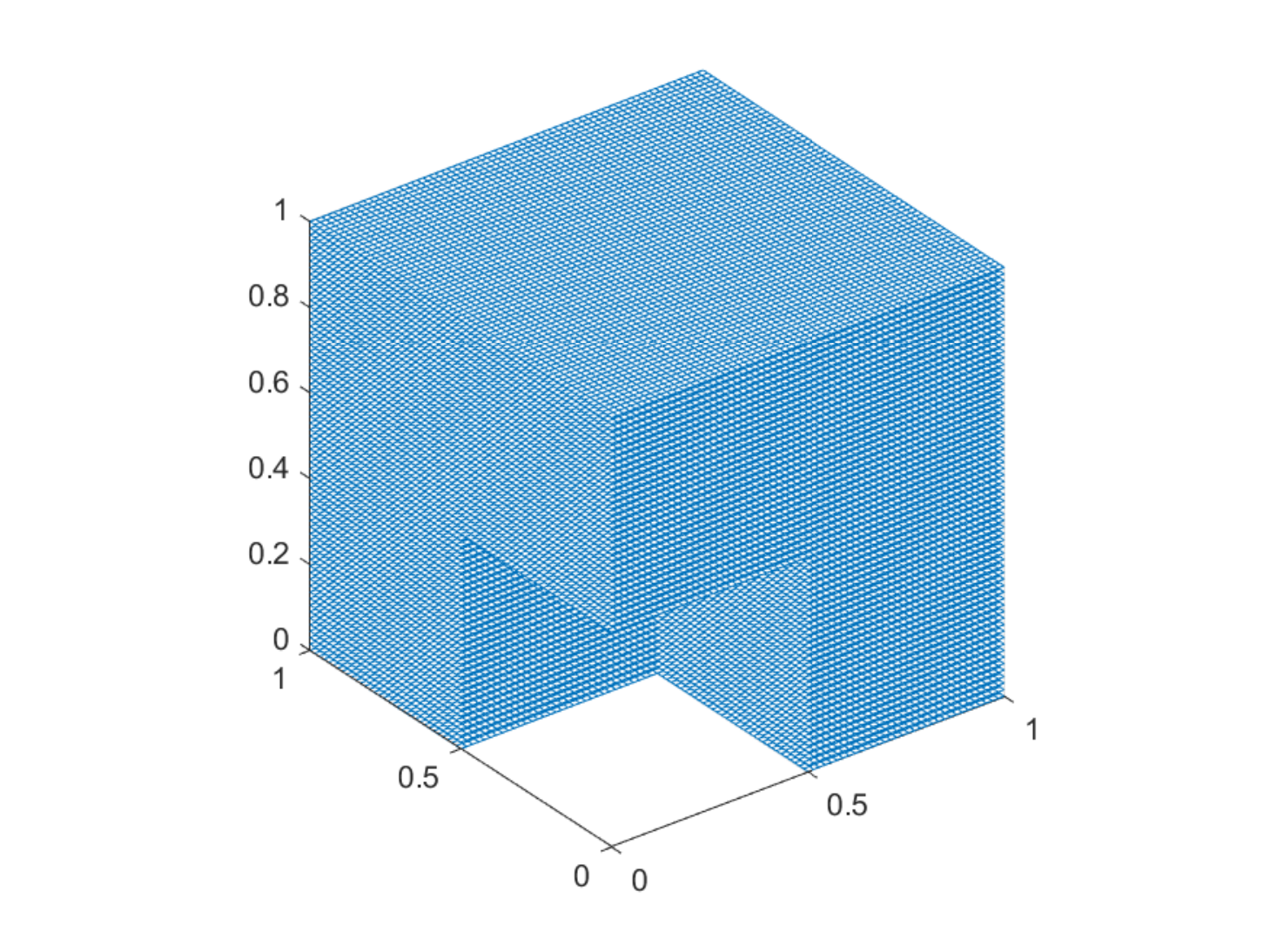}
	\center{(a). Offline sampling mesh $\mc{T}_H$.}
	\end{minipage}
	\hfill
\begin{minipage}{0.48\linewidth}
 \includegraphics[width=6cm]{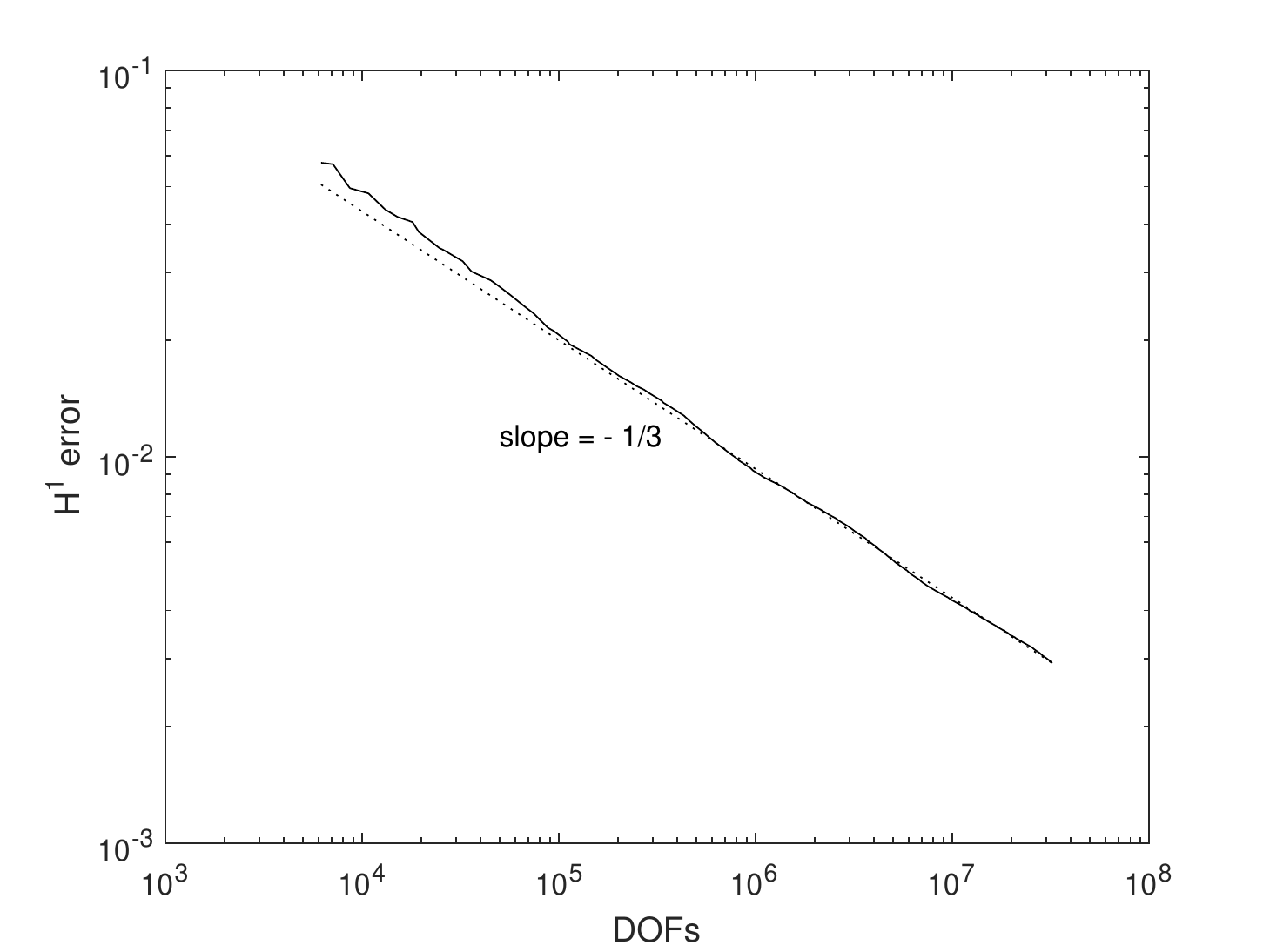}
\center{(b). H$^1$ error with tolerance$=3E-3$.}
\end{minipage}
\caption{Offline triangulation and H$^1$ error for Example~\ref{ex:3d}.}\label{fig:3d}
\end{figure}
\subsection{Example with almost periodic coefficient}
\begin{example}\label{ex:QuasiEx1}
The example is the same with Example~\ref{ex:2dtang} except that
\begin{equation}\label{eq:quasiEx1}
\a(x)=a^0(x/\eps)\,a^1(x)1_{2\x 2},
\end{equation} where
\begin{equation*}
a^0(x)=\left(
\begin{array}{cc}
6+\sin\Lr{2\pi  x_1}^2+\sin\Lr{2\sqrt{2}\pi  x_1}^2&0\\
0&6+\sin\Lr{2\pi  x_2}^2+\sin\Lr{2\sqrt{2}\pi  x_2}^2\\
\end{array}
\right),
\end{equation*}
and
\[
a^1(x)=(2.5+1.5\sin(2\pi x_1))(2.5+1.5\cos(2\pi x_2)).
\]

Such coefficient belongs to Kozlov class~\cite{Kozlov:1978} and the example is adapted from~\cite{Gloria:2016}. The 
coefficients $a^0_{11}(x/\eps)$ and $\a_{11}$ are visualized in Fig.~\ref{fig:quasiCoef} with $\eps=0.1$.
 %
 \begin{figure}[htbp]\centering
	\begin{minipage}{0.48\linewidth}
		\centerline{\includegraphics[width=8cm]{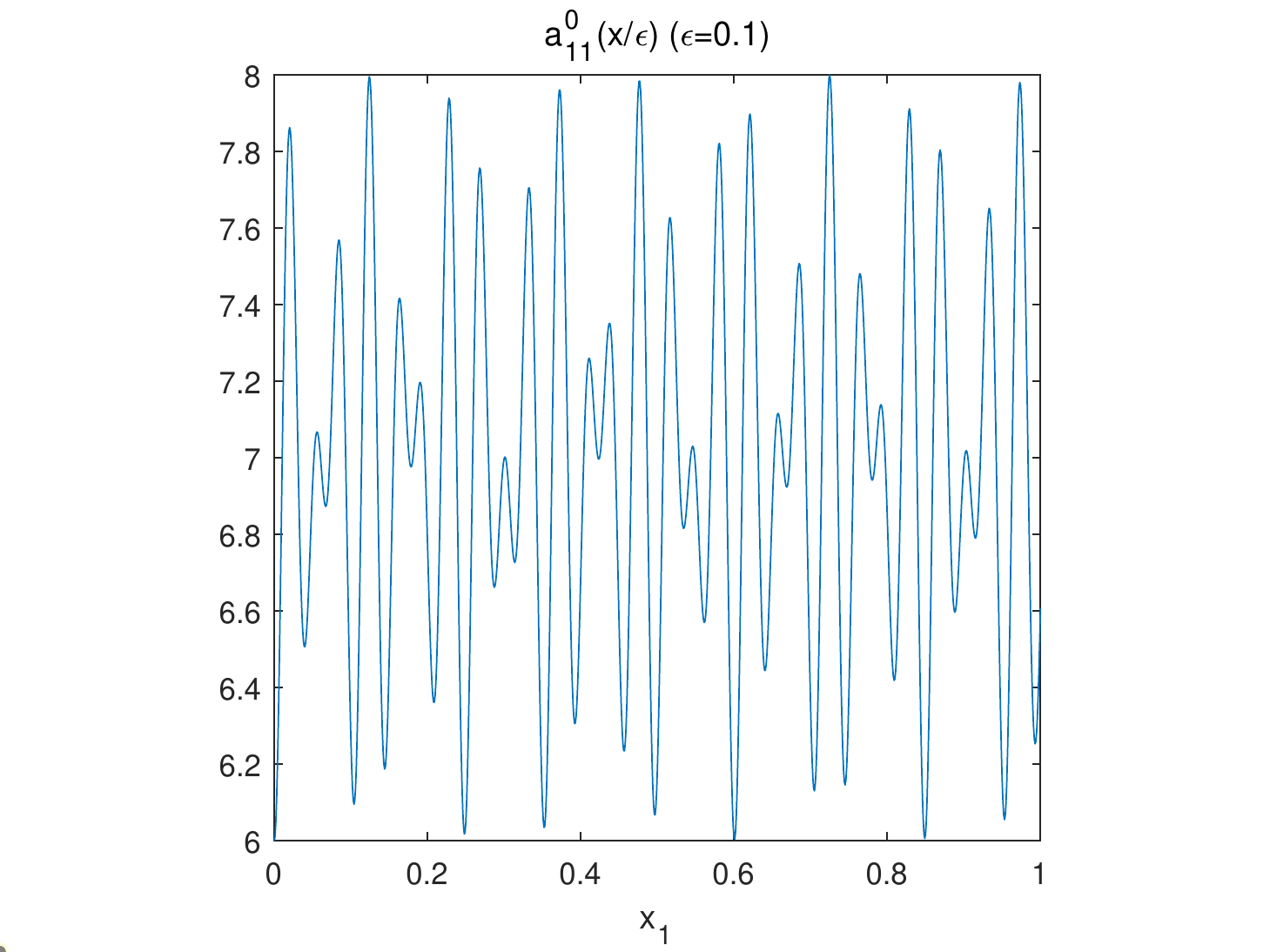}}
		\center{(a). plot of $a^0_{11}(x/\eps)$ .}
	\end{minipage}
	\hfill
	\begin{minipage}{0.48\linewidth}
		\centerline{\includegraphics[width=8cm]{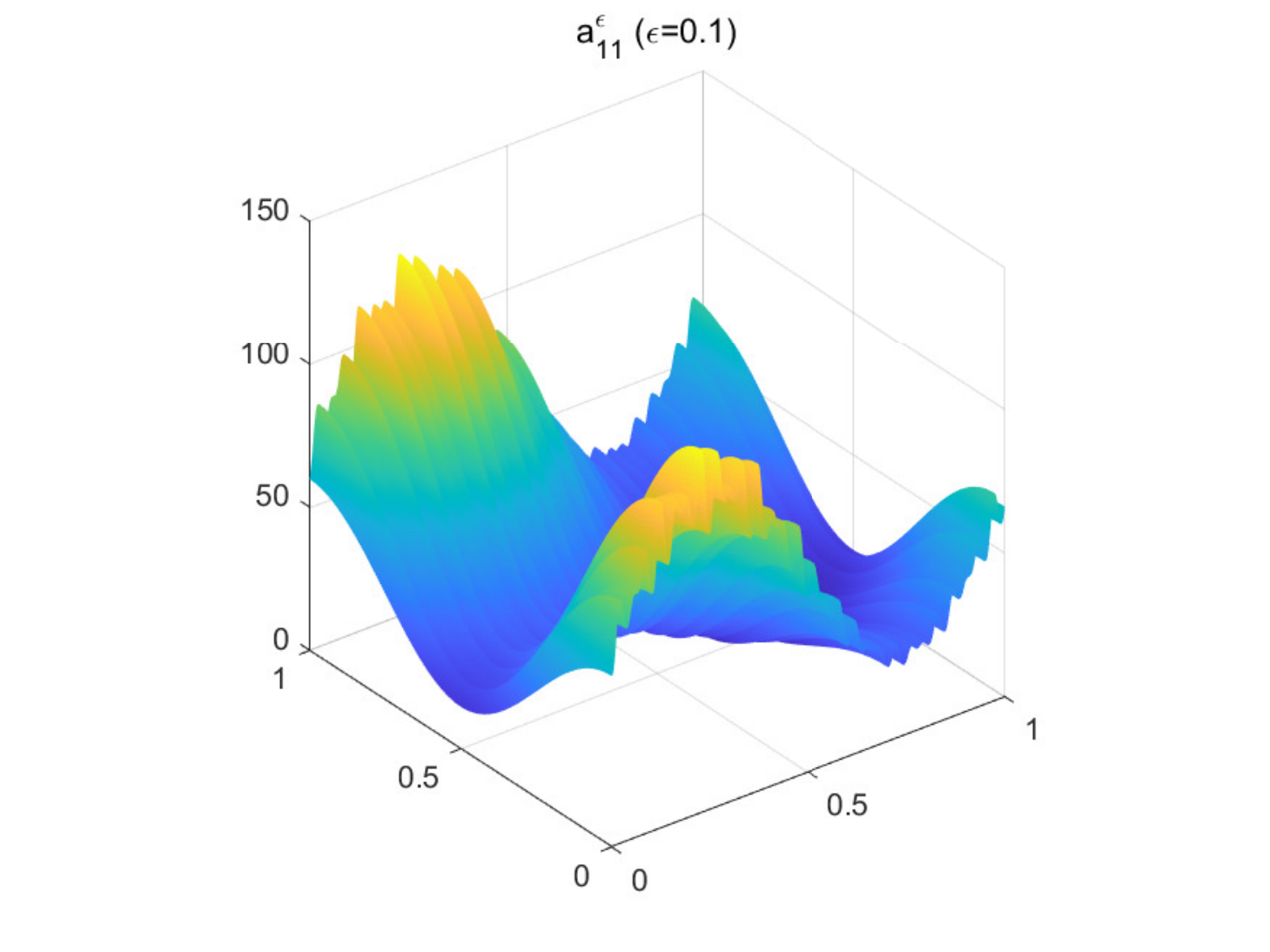}}
		\center{(a). plot of $\a_{11}$.}
	\end{minipage}
	\caption{Plots of the coefficients in Example~\ref{ex:QuasiEx1} ($\eps=0.1$)}\label{fig:quasiCoef}
\end{figure}
The effective matrix is given by
\begin{equation}\label{eq:quasihomo}
\mc{A}_{ij}(x)=\liminf_{R\rightarrow+\infty}\aver{\na(\chi^j+x_j)\cdot a^0(x)\na(\chi^i+x_i)}_{Q_R}* a_1(x),\quad i,j=1,2,
\end{equation}
where $Q_R=(-R,R)^2$ and $\{\chi^i\}_{i=1}^2$ are the solutions of 
\begin{equation}\label{eq:quasiaux}
-\na\cdot(a^0(x)\na \chi^i)=\na \cdot(a^0(x)\na x_i) \quad \text{in\quad} \mb{R}^2.
\end{equation}
As oppose to the locally periodic case, there is no explicit expression of $\mc{A}$.
The naive approach to approximate~\eqref{eq:quasihomo} consists in replacing $\{\chi^i\}_{i=1}^2$ by $\{\chi^i_R\}_{i=1}^2$which are solutions of a truncated cell problem
\begin{equation*}
\left\{\begin{aligned}
-\na\cdot(a^0(x)\na \chi^i_R)=\na \cdot(a^0(x)\na x_i) &\quad \text{in\quad} Q_R,\\
\chi^i_R=0&\quad \text{on\quad}\pa Q_R,
\end{aligned}\right.
\end{equation*}
and $\mc{A}_R$ is defined by
\[
\Lr{\mc{A}_R(x)}_{ij}{:}=\aver{\na(\chi^j_R+x_j)\cdot a^0(x)\na(\chi^i_R+x_i)}_{Q_R}* a_1(x),\quad i,j=1,2.
\]
We take $R=200$ and use $\mb{P}_2$ FEM over a $4000\x 4000$ mesh to solve the above truncated cell problem and obtain 
\[
\mc{A}_{200}=\begin{pmatrix}
		7.00* a^1(x)&0\\
		0&7.00* a^1(x)
		\end{pmatrix}.
\]
\end{example}

In the offline stage, we choose $\delta=10\,\eps$ and use $\mb{P}_2$ FEM over a mesh with size $120\times120$ to solve the cell problems~\eqref{eq:cell}. Next we plot the relative $H^1$ error and the relative $L^2$ error in Fig.~\ref{fig:quasierr}, which clearly shows the higher-order reconstruction is more accurate.
\begin{figure}[h]
	\begin{minipage}{0.48\linewidth}
		\centering
		\includegraphics[width=7cm]{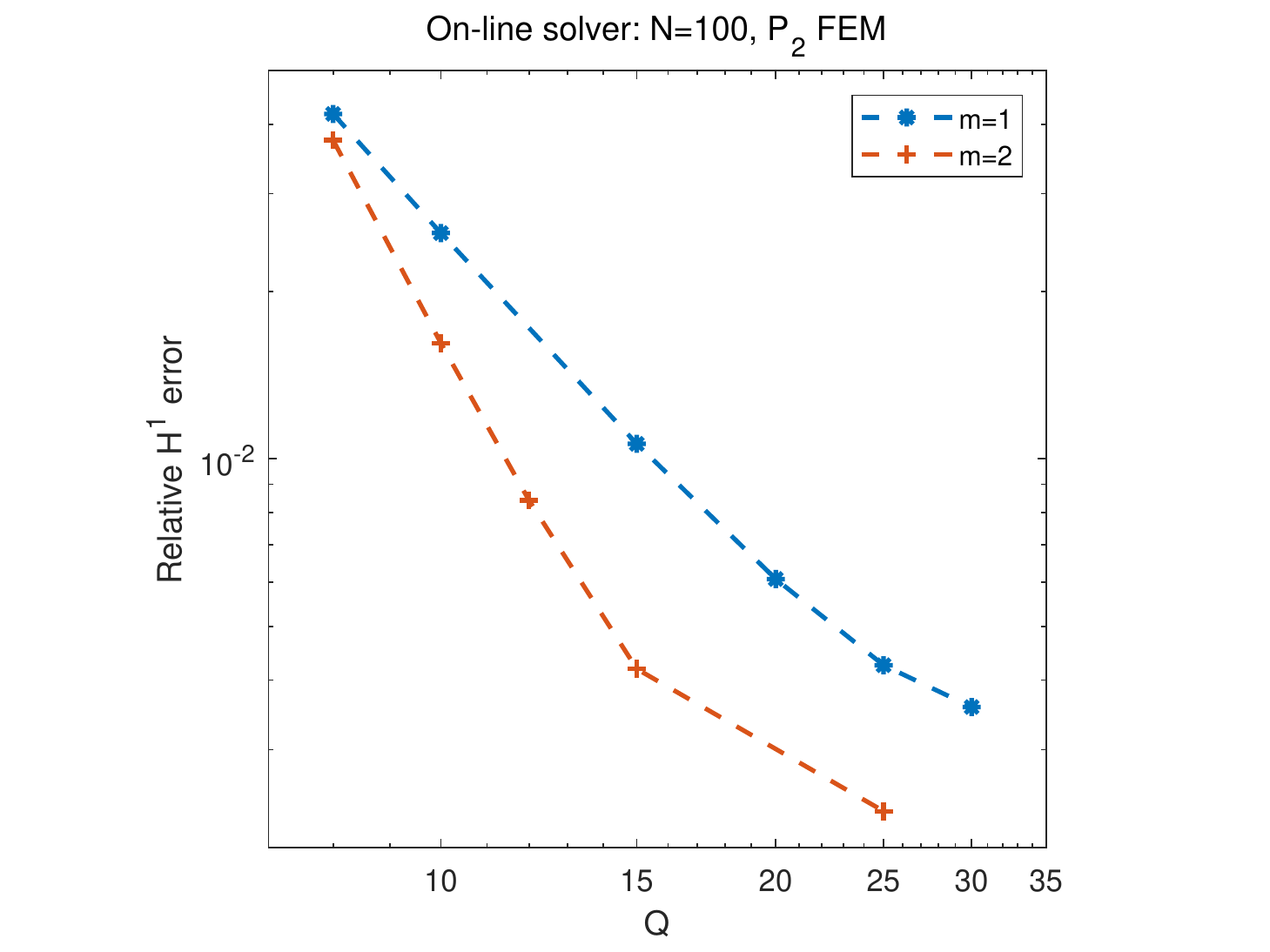}
		\center{(a). The relative $H^1$ error with different reconstruction orders.}
	\end{minipage}
	\hfill
	\begin{minipage}{0.48\linewidth}
		\centering
		\includegraphics[width=7cm]{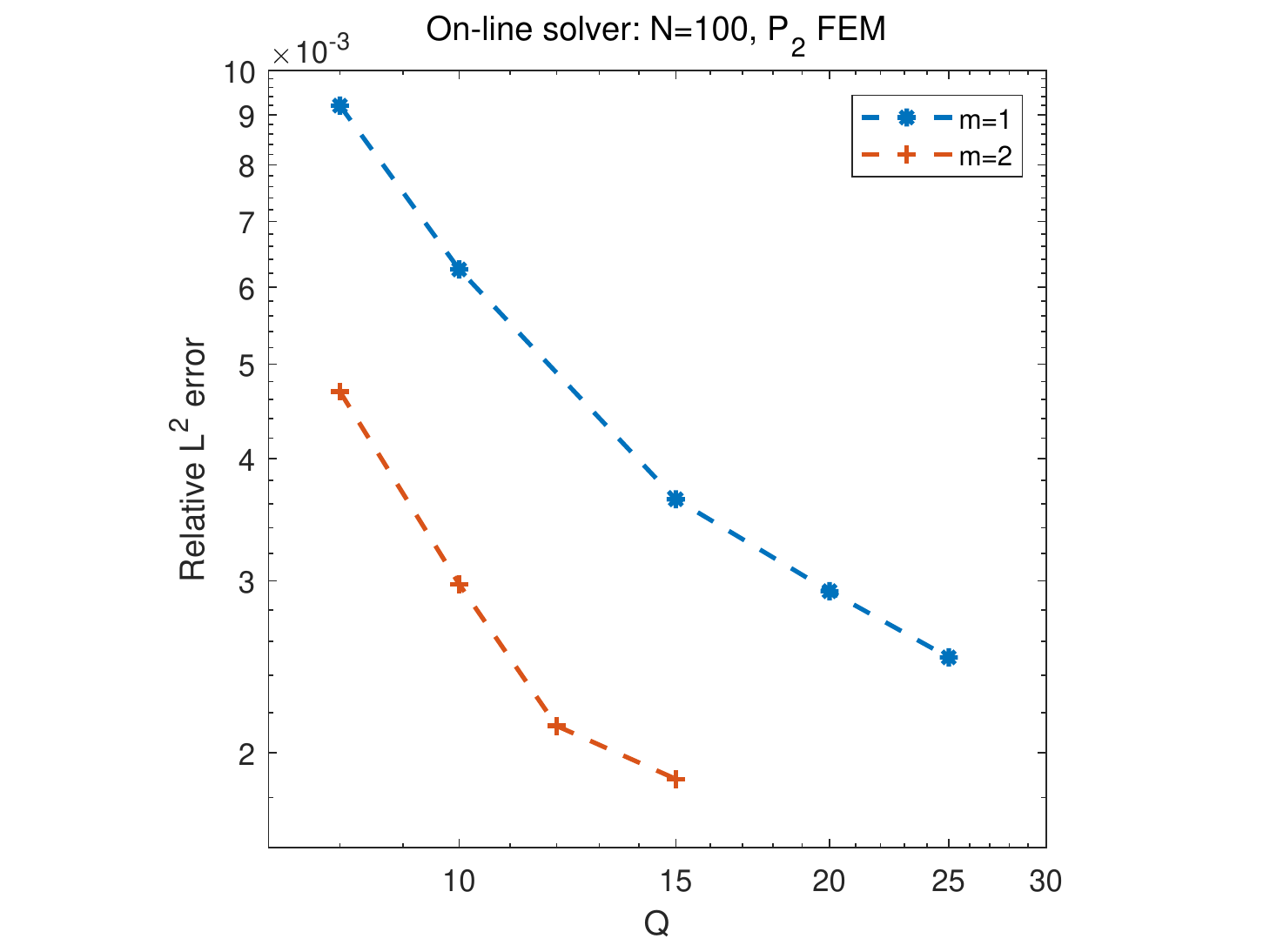}
		\center{(b). The relative $L^2$ error with different reconstruction orders.}
	\end{minipage}
	\caption{The effect of $Q$ and the reconstruction orders in Example~\ref{ex:QuasiEx1}; Online solver is $\mb{P}_2$ FEM and $N=100$.}
	\label{fig:quasierr}
\end{figure}

Finally we compare the number of the cell problems and the running time among reconstructions of different orders in Table~\ref{table:QuasiEx1}.
\begin{table}[h]
	\caption{Comparison of the reconstruction of different orders; online solver is $\mb{P}_2$ FEM and $N=100$.}\label{table:QuasiEx1}
\vskip .1cm
\centering
	\begin{tabular}{|c||c|c|c|c|}
		\hline
		& Q &$\sharp$ cell problems &Relative $H^1$ error & Time\\
		\hline
		$m=1$ & 20 &400&6.08e-3 & 1083.08s\\	
		\hline	
		$m=2$ & 14 &196 &4.95e-3 & 546.24s\\	
		\hline	
	\end{tabular}
\end{table}
The second order reconstruction takes only about $50\%$ of the time used for the first order reconstruction. It is clear that the higher-order reconstruction is more accurate with less cost.
\subsection{Example with random coefficient}
\begin{example}\label{ex:RandEx1}
	The example is the same with Example~\ref{ex:2dtang} except that
	\begin{equation}\label{eq:rand2}
	\a(x)=\Lr{\a_{\text{rand}}+a_0(x)}1_{2\x 2},
	\end{equation}
where $\a_{\text{rand}}$ is a random checker-board, and $a_0(x)=(2.5+1.5\sin(2\pi x_1))(2.5+1.5\cos(2\pi x_2))$. 
$\a_{\text{rand}}$ is constructed by partitioning $D=(0,1)^2$ into uniform square cells of size $\eps$, each of which is randomly designated as $k_1$ or $k_2$ with probability $p_1$ and $p_2=1-p_1$, respectively. We visualize one realization of the random coefficients in Fig.~\ref{fig:randa11} with $\eps=0.02$. Theorem~\ref{thm:main} remains true with a minor modification of $e_1(\text{MOD})$, we refer to~\cite{EMingZhang:2005} for related result.
\begin{figure}[h]
\begin{minipage}{0.48\linewidth}
		\centering
	\includegraphics[width=7cm]{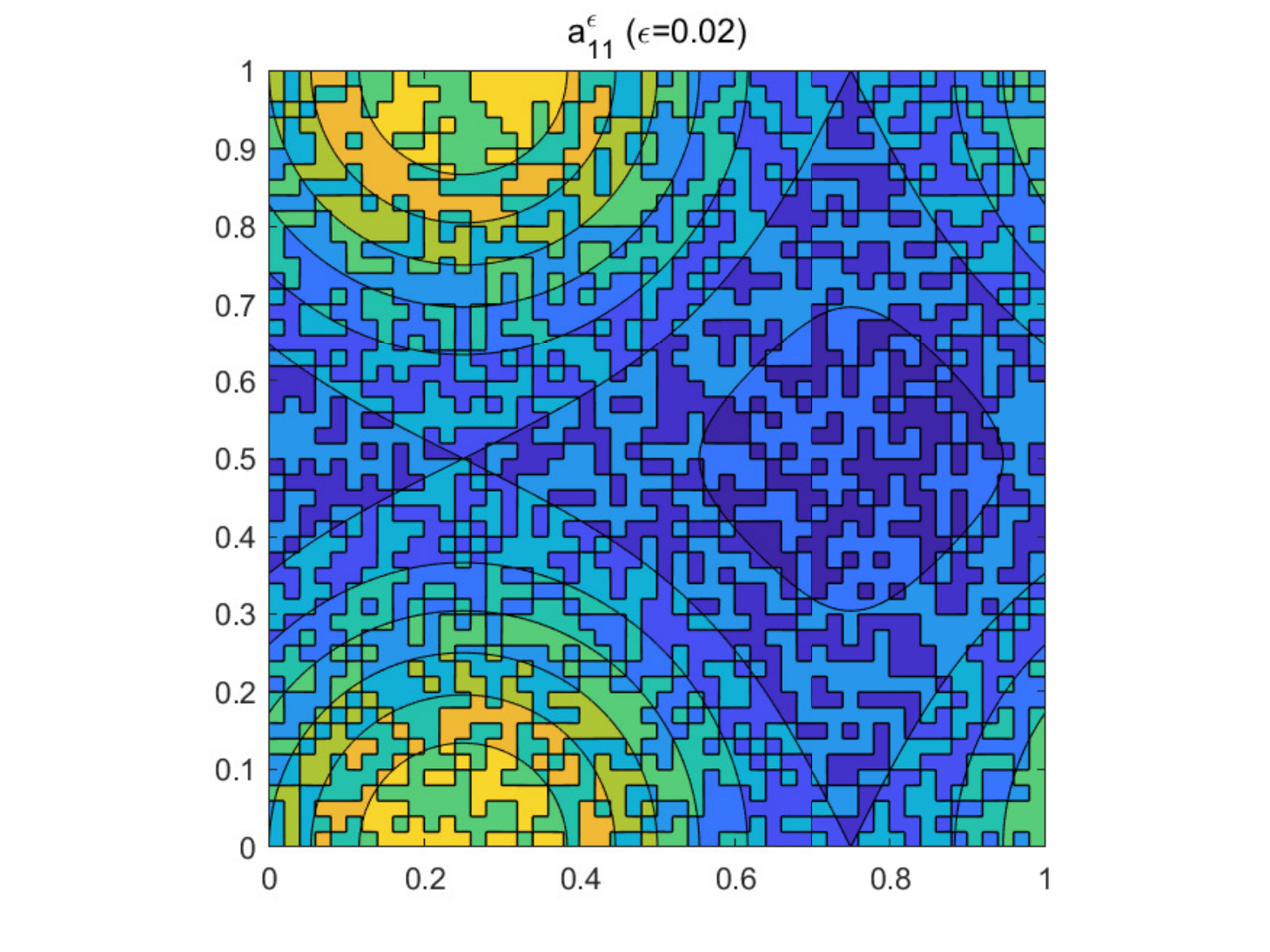}
\end{minipage}
\hfill
\begin{minipage}{0.48\linewidth}
		\centering
\includegraphics[width=7cm]{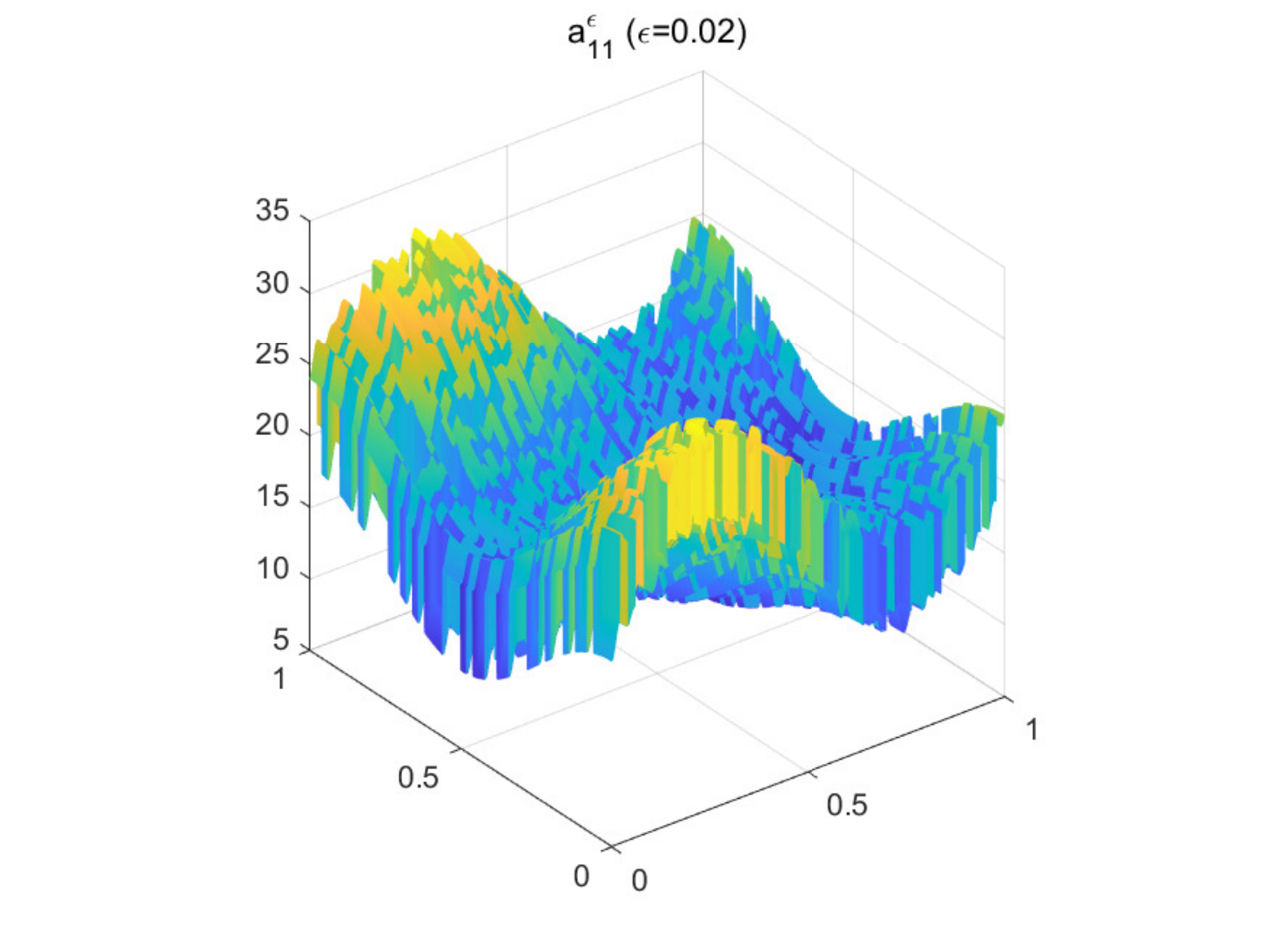}
\end{minipage}
\caption{Plot of one realization of $\a_{11}$ in~\eqref{eq:rand2}. (a) is the contour of $\a_{11}$.}\label{fig:randa11}
\end{figure}
\end{example}

As oppose to the standard random checker-board~\cite{Keller:1964}, there is no explicit formula for the effective matrix of~\eqref{eq:rand2}.  To extract the effective matrix over $D$, we take $\delta=16\,\eps$ as the cell size and use $\mb{P}_2$ FEM over a mesh of size $160\times 160$ to solve the cell problem~\eqref{eq:percell}.  We set the offline mesh size as $Q=20$ and use third order reconstruction to get an approximation effective matrix $\mc{A}$, which will be exploited to obtain  the reference solution in the following test.

To justify this approach, we choose $A_H$ at three representative points in $D$: $A=(1/4,0)$, which is one of the maximum point of $a_0$ over $D$; $B=(3/4,1/2)$, which is one of the minimum point of $a_0$ over $D$,  and $C=(1,1/4)$, which is one of the maximum point of $\na a_0$ over $D$. We let $\delta=L\,\eps$ and use $\mb{P}_2$ FEM over a mesh of size $10L\x 10L$  to solve the cell problems~\eqref{eq:percell} posed over these three points. We denote by 
$\mc{A}_{16}^n$ the approximation effective matrix for $n-$th realization with $L=16$. In addition, we use the empirical average 
\begin{equation}\label{eq:empirical}
\mb{E}(\mc{A}_{16})(x){:}=\dfrac{1}{N}\sum_{n=1}^N\mc{A}_{16}^n(x),
\end{equation} 
as the proxy of the expectation $\mb{E}$, where $N$ is the total number of the realization. We take $N=1000$ in the simulation, and the results are reported in Table~\ref{table:RandEx1}.
\begin{table}[h]
	\caption{The approximating effective matrix on three points with $L=16$.}\label{table:RandEx1}
	\vskip .2cm
	\centering
	\begin{tabular}{|c|c|}
		\hline
		& $\mb{E}(\mc{A}_{16})$\\
		\hline	
		$A=(1/4,0)$ &$\left(
		\begin{array}{cc}
		20.78&-2.58e-4\\
		-2.58e-4&20.78\\
		\end{array}
		\right)$ \\
		\hline	
		$B=(3/4,1/2)$&$\left(
		\begin{array}{cc}
		5.20&-9.47e-4\\
		-9.47e-4&5.20\\
		\end{array}
		\right)$\\
		\hline	
		$C=(1,1/4)$&$\left(
		\begin{array}{cc}
		10.84&-4.86e-4\\
		-4.86e-4&10.84\\
		\end{array}
		\right)$\\	
		\hline		
	\end{tabular}
\end{table}

Secondly, for $x=A,B,C$, we measure the variance as
\[
\sigma^{\text{diag}}_L(x){:}=\sqrt{\mb{E}\Lr{\Lr{\mc{A}_L(x)_{11}-\mb{E}(\mc{A}_{16}(x))_{11}}^2+\Lr{\mc{A}_L(x)_{22}-\mb{E}(\mc{A}_{16}(x))_{22}}^2}},
\]
and
\[
\sigma^{12}_L(x){:}=\sqrt{\mb{E}\Lr{\mc{A}_L(x)^2}_{12}}.
\]
Fig.~\ref{fig:rande1MOD}a and Fig.~\ref{fig:rande1MOD}b suggest that $\sigma^{\text{diag}}_L(x)$  and $\sigma^{12}_L(x)$ decay as $\mc{O}(L^{-1})$, which is consistent with the theoretical predictions~\cite{GloriaOtto:2015}. A systematical numerical tests for the variance may be found in a recent work~\cite{KhOtto:2020}. 
%
\begin{figure}[htbp]
	\begin{minipage}{0.48\linewidth}
		\centering
		\includegraphics[width=7cm]{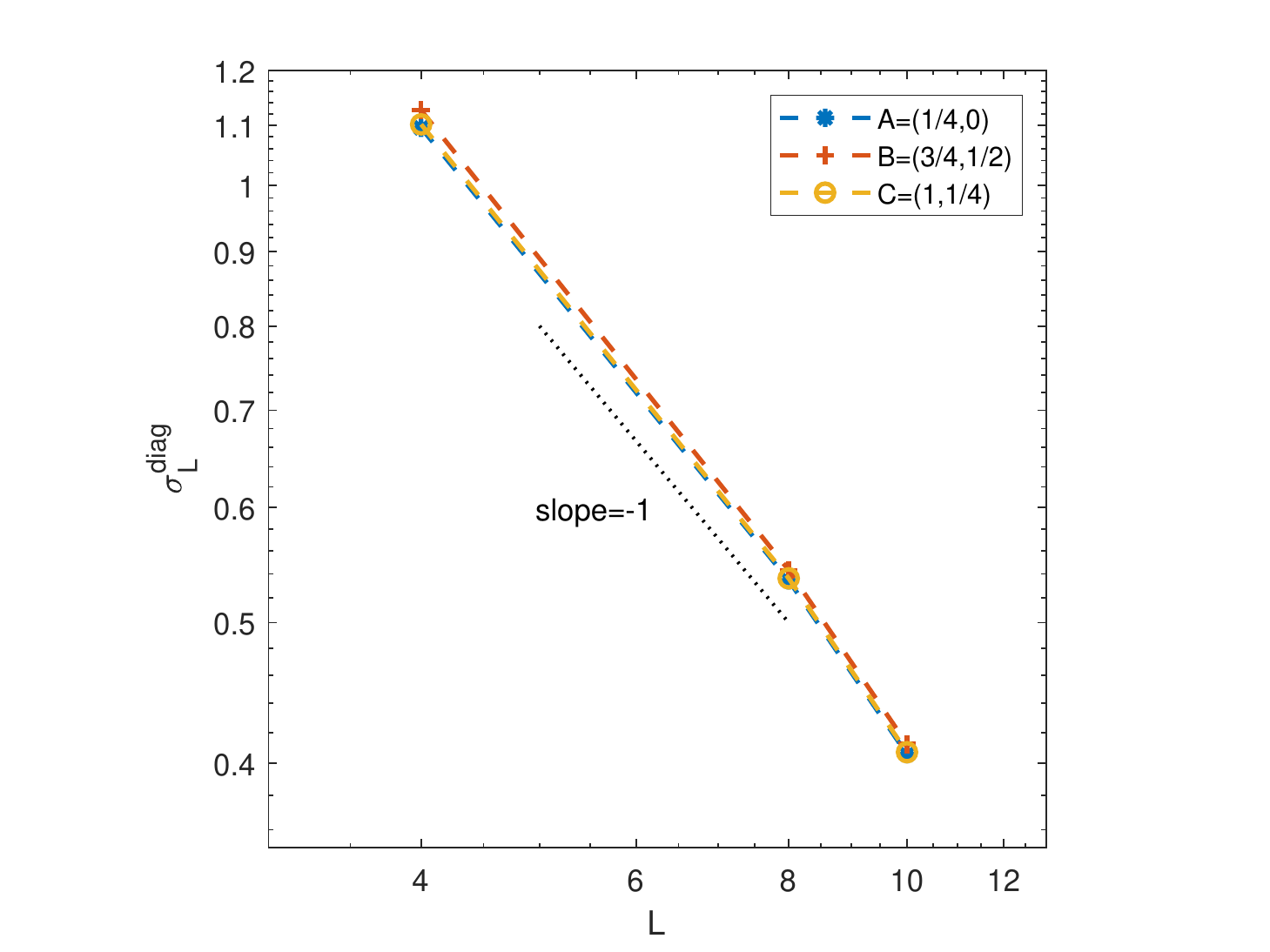}
		\center{(a). Decays of $\sigma^{\text{diag}}_L$ with respect to the cell size $L$.}
	\end{minipage}
	\hfill
	\begin{minipage}{0.48\linewidth}
		\centering
		\includegraphics[width=7cm]{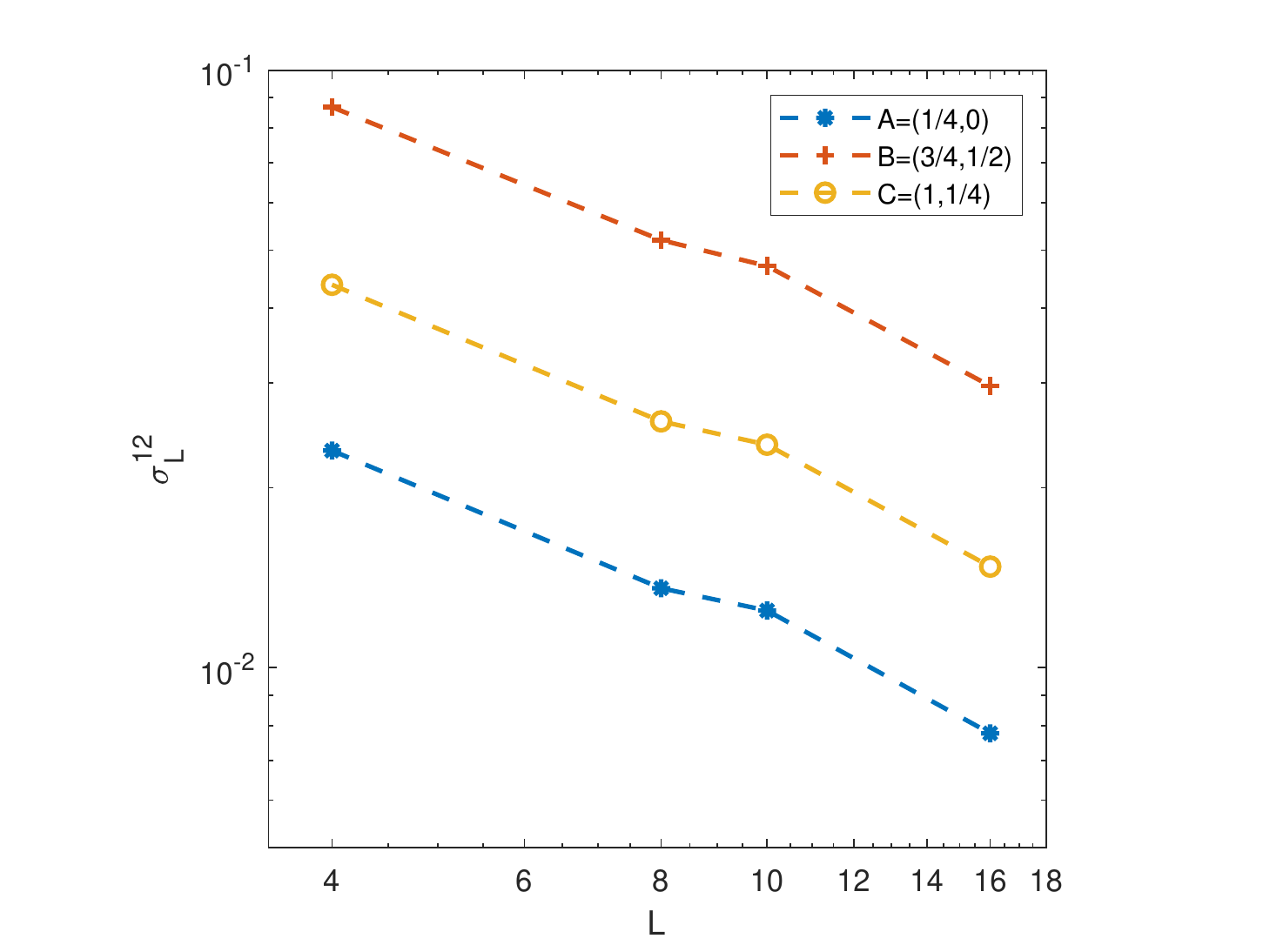}
		\center{(b). Decays of $\sigma^{12}_L$ with respect to the cell size $L$.}
	\end{minipage}
	\caption{The accuracy for the approximating effective matrix.}
	\label{fig:rande1MOD}
\end{figure}
%

Finally we compare the effect of the reconstructions with different orders in the offline computation. We fix the online solver as $\mb{P}_2$ FEM over a mesh with size $100\x 100$. In the offline stage, we choose $\delta=8\,\eps$ and use $\mb{P}_2$ FEM over a mesh with size $80\x 80$ to solve the cell problems~\eqref{eq:cell}.  We take $Q=8,\, 10,\, 16$ and $m=1,\, 2$ in the tests, and compute the ensemble average of  the relative $H^1$ error and $L^2$ error
\[
   \dfrac{\mb{E}\Lr{\nm{\na(u_0-u_h)}{L^2(D)}}}{\nm{\na u_0}{L^2(D)}}\quad \text{and} \quad \dfrac{\mb{E}\Lr{\nm{u_0-u_h}{L^2(D)}}}{\nm{u_0}{L^2(D)}}.
\]
The expectation $\mb{E}$ is replaced by the empirical average as that in~\eqref{eq:empirical} with $N=1000$ realizations. The visualization in Fig.~\ref{fig:randAdderr}a and Fig.~\ref{fig:randAdderr}b clearly shows that the second order reconstruction is more accurate than the first order reconstruction.
\begin{figure}[h]
	\begin{minipage}{0.48\linewidth}
		\centering
		\includegraphics[width=7cm]{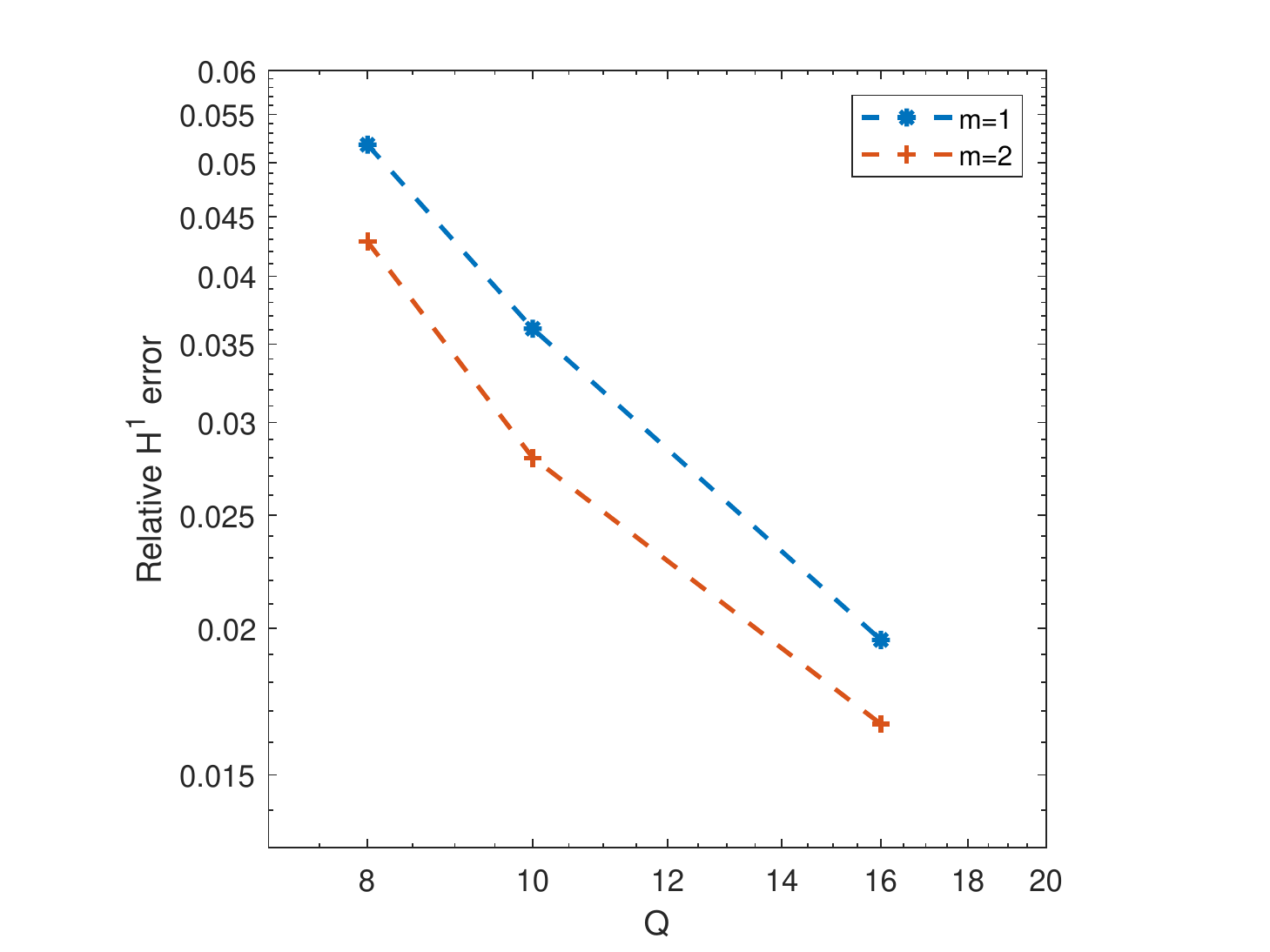}
		\centerline{(a). Ensemble average of the $H^1$ error.}
	\end{minipage}
\hfill
\begin{minipage}{0.48\linewidth}
		\centering
		\includegraphics[width=7cm]{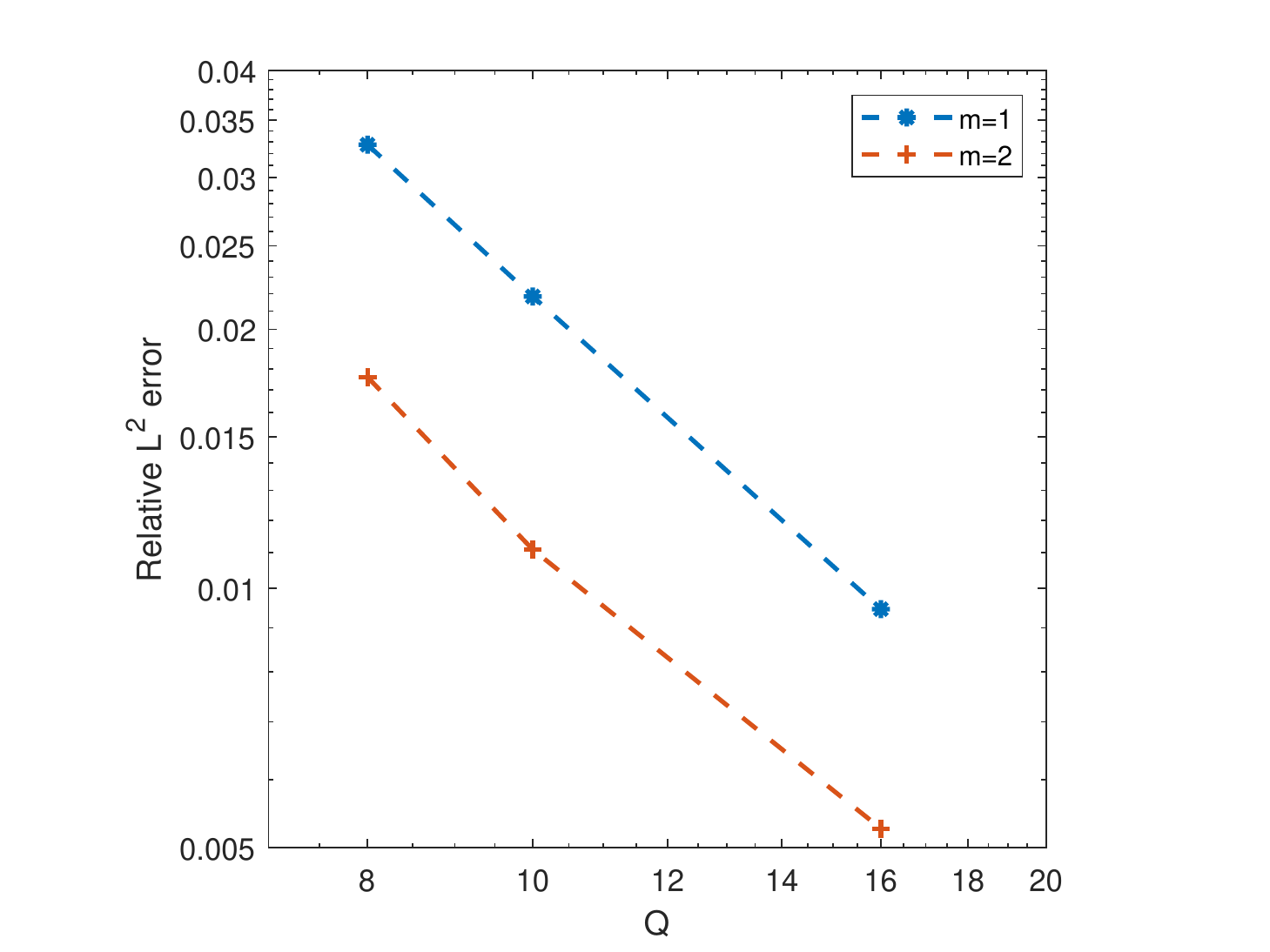}
		\center{(b). Ensemble average of the $L^2$ error.}
	\end{minipage}
	\caption{The error between numerical solution and homogenized solution in Example~\ref{ex:RandEx1}.}
	\label{fig:randAdderr}
\end{figure}
\section{Conclusion}
 We have proposed a new online-offline method to solve the multiscale elliptic problems. Both theoretical and numerical results show that the method significantly reduces the cost while retains the optimal rate of convergence. Moreover, the strategy is problem independent, and it can be extended to time-dependent problems~\cite{MingZhang:2007, Engquist:2012}. The implementation of the present method is mainly based on the a priori error estimate. Adaptive algorithms based on the a priori error estimate should be developed for automatic tuning of the parameters so that the method is more efficient. We shall leave all these issues in the future work. 

\end{document}